\newtheorem{thm}{Theorem}
\newtheorem{lem}[thm]{Lemma}
\newtheorem{cor}[thm]{Corollary}
\DeclareRobustCommand*\CFrac[2]{%
  \mathinner{\mathchoice
    {\@CFrac\textstyle\textfont{4}{#1}{#2}}%
    {\@CFrac\scriptstyle\textfont{}{#1}{#2}}%
    {\@CFrac\scriptscriptstyle\scriptfont{}{#1}{#2}}%
    {\@CFrac\scriptscriptstyle\scriptscriptfont{}{#1}{#2}}}}
\def\@CFrac#1#2#3#4#5{%
  {\dimen\tw@\fontdimen8#23\relax \dimen\tw@#3\dimen\tw@
    \setbox\z@\hbox{\m@th$#1\;{#4}\;\mathstrut$}%
    \@tempdima\dp\z@ \advance\@tempdima\dimen\tw@
    \setbox\tw@\hbox{\m@th$#1\;{#5}\;\mathstrut$}%
    \@tempdimb\ht\tw@ \advance\@tempdimb\dimen\tw@
    \dimen@ \ifdim\wd\z@>\wd\tw@ \wd\z@ \else \wd\tw@ \fi
    \setbox\z@\hbox to\dimen@{%
      \hss\unhbox\z@\hss
      \vrule \vrule\@width\z@\@depth\@tempdima}%
    \setbox\tw@\hbox to\dimen@{%
      \vrule\@width\z@\@height\@tempdimb \vrule
      \hss\unhbox\tw@\hss}%
    \dimen\tw@\fontdimen22#22\relax
    \advance\dimen\tw@\ht\z@ \advance\dimen\tw@\dp\z@
    \advance\dimen\tw@ .2\p@
    \setbox\z@\vbox{\box\z@ \hrule \box\tw@}%
    \advance\dimen\tw@-\ht\z@
    \raise\dimen\tw@\box\z@}}
\journal{}
\begin{document}

\begin{frontmatter}



\title{Laurent biorthogonal polynomials, $q$-Narayana polynomials and domino tilings of the Aztec diamonds}


\author{Shuhei Kamioka}

\ead{kamioka.shuhei.3w@kyoto-u.ac.jp}

\address{Department of Applied Mathematics and Physics, Graduate School of Informatics,
  Kyoto University, Kyoto 606-8501, Japan}

\begin{abstract}
  A T\"oplitz determinant whose entries are described
  by a $q$-analogue of the Narayana polynomials is evaluated
  by means of Laurent biorthogonal polynomials which allow of a combinatorial interpretation
  in terms of Schr\"oder paths.
  As an application, a new proof is given to the Aztec diamond theorem
  by Elkies, Kuperberg, Larsen and Propp concerning domino tilings of the Aztec diamonds.
  The proof is based on the correspondence with non-intersecting Schr\"oder paths developed by Eu and Fu.
\end{abstract}

\begin{keyword}
orthogonal polynomials \sep
Narayana polynomials \sep
Aztec diamonds \sep
lattice paths \sep
Hankel determinants


\end{keyword}

\end{frontmatter}


\section{Introduction}
\label{sec:introduction}

{\em Laurent biorthogonal polynomials (LBPs)} are orthogonal functions which play fundamental roles
in the theory of two-point Pad\'e approximants at zero and infinity \cite{Jones-Thron(1982)}.
In Pad\'e approximants, LBPs appear as the denominators of the convergents of a T-fraction.
(See also, e.g., \cite[Chapter 7]{Jones-Thron(1980CF)}
and \cite{Hendriksen-VanRossum(1986),Zhedanov(1998)}.)
Recently, the author exhibited a combinatorial interpretation of LBPs
in terms of lattice paths called {\em Schr\"oder paths} \cite{Kamioka(2007),Kamioka(2008)}.
In this paper, we utilize LBPs to calculate a determinant
whose entries are given by a $q$-analogue of the Narayana polynomials \cite{Bonin-Shapiro-Simion(1993)}
which have a combinatorial expression in Schr\"oder paths.
As an application, we give a new proof to the Aztec diamond theorem
by  Elkies, Kuperberg, Larsen and Propp
\cite{Elkies-Kuperberg-Larsen-Propp(1992.01),Elkies-Kuperberg-Larsen-Propp(1992.02)}
by means of LBPs and Schr\"oder paths.

A {\em Schr\"oder path} $P$ is a lattice path in the two-dimensional plane $\mathbb{Z}^{2}$
consisting of up steps $(1,1)$, down steps $(1,-1)$ and level steps $(2,0)$,
and never going beneath the $x$-axis.
See Figure \ref{fig:SchPath} for example.
\begin{figure}
  \centering
\includegraphics{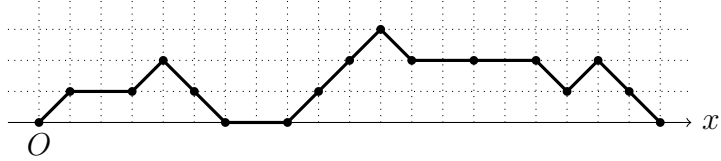}
  \caption{A Schr\"oder path $P \in S_{10}$ such that $\mathrm{level}(P) = 4$ and $\mathrm{area}(P) = 20$.}
  \label{fig:SchPath}
\end{figure}
For $k \in \mathbb{N} = {\{ 0,1,2,\ldots \}}$,
let $S_k$ denote the set of Schr\"oder paths from $(0,0)$ to $(2k,0)$.
The number $\# S_k$ of such paths is counted by the $k$-th large Schr\"oder number (A006318 in OEIS \cite{OEIS}).
The first few of $\# S_k$ are $1$, $2$, $6$, $22$ and $90$.

Enumerative or statistical properties of Schr\"oder paths are often investigated through the {\em Narayana polynomials}
\begin{gather}
  L_{k}(t) = \sum_{j=1}^{k} \frac{1}{k} \binom{k}{j} \binom{k}{j-1} (1+t)^{j}, \qquad k \in \mathbb{N},
\end{gather}
where $N_0(t) = 1$.
(The coefficients $\frac{1}{k} \binom{k}{j} \binom{k}{j-1}$ are the {\em Narayana numbers},
A001263 in OEIS \cite{OEIS}.)
Bonin, Shapiro and Simion \cite{Bonin-Shapiro-Simion(1993)} interpreted the Narayana polynomials
by counting the level steps in Schr\"oder paths,
\begin{gather}
  L_{k}(t) = \sum_{P \in S_k} t^{\mathrm{level}(P)}
\end{gather}
where $\mathrm{level}(P)$ denotes the number of level steps in a Schr\"oder path $P$.
(Level steps in this paper are identified with ``diagonal'' steps in \cite{Bonin-Shapiro-Simion(1993)}.)
For more about the Narayana polynomials and related topics,
see, e.g., Sulanke's paper \cite{Sulanke(2002)} and the references therein.
Besides level steps, Bonin et al.~also examined the area polynomials
\begin{gather}
  A_{k}(q) = \sum_{P \in S_k} q^{\mathrm{area}(P)}, \qquad k \in \mathbb{N},
\end{gather}
with respect to the statistic $\mathrm{area}(P)$ that measures the area bordered by a path $P$ and the $x$-axis.
(In \cite{Bonin-Shapiro-Simion(1993)}, the major index is also examined, but we will not consider in this paper.)
In this paper, we consider the two statistics $\mathrm{level}(P)$ and $\mathrm{area}(P)$ simultaneously
in the polynomials
\begin{gather}
  N_{k}(t,q) = \sum_{P \in S_k} t^{\mathrm{level}(P)} q^{\mathrm{area}(P)}, \qquad k \in \mathbb{N}.
\end{gather}
We refer to $N_{k}(t,q)$ by the {\em $q$-Narayana polynomials}.
Obviously, the $q$-Narayana polynomials satisfy that $N_k(t,1) = L_k(t)$ and $N_k(1,q) = A_k(t)$,
and reduce to the large Schr\"oder numbers, $N_k(1,1) = \# S_k$,
as well as the Catalan numbers, $N_k(0,1) = \frac{1}{k+1} \binom{2k}{k}$.
In Section \ref{sec:qNarPolMom}, we find the LBPs of which the moments are described by the $q$-Narayana polynomials.

The aim of this paper is twofold:
(i) to calculate a determinant whose entries are described by the $q$-Narayana polynomials $N_k(t,q)$;
(ii) to give a new proof to the Aztec diamond theorem by Elkies, Kuperberg, Larsen and Propp
\cite{Elkies-Kuperberg-Larsen-Propp(1992.01),Elkies-Kuperberg-Larsen-Propp(1992.02)}
by means of LBPs and Schr\"oder paths.

Determinants whose entries are given
by the large Schr\"oder numbers, by the Narayana polynomials and by their $q$-analogues are calculated
by many authors using various techniques.
Ishikawa, Tagawa and Zeng \cite{Ishikawa-Tagawa-Zeng(2009)} found a closed-form expression
of Hankel determinants of a $q$-analogue of the large Schr\"oder numbers
in a combinatorial way based on Gessel--Viennot's lemma \cite{Gessel-Viennot(1985)}.
Petkovi\'c, Barry and Rajkovi\'c \cite{Petkovic-Barry-Rajkovic(2012)} calculated Hankel determinants
described by the Narayana polynomials
using an analytic method of solving a moment problem of orthogonal polynomials.
In Section \ref{sec:ToeplitzDets},
we evaluate a T\"oplitz determinant described by the $q$-Narayana polynomials $N_k(t,q)$
by means of a combinatorial interpretation of LBPs in terms of Schr\"oder paths.

Counting domino tilings of the Aztec diamonds is a typical problem of tilings which is exactly solvable.
For $n \in \mathbb{N}$, the {\em Aztec diamond $\mathit{AD}_{n}$ of order $n$} is the union
of all unit squares which lie inside the closed region $|x|+|y| \le n+1$.
A domino denotes a one-by-two or two-by-one rectangle.
Then, a {\em domino tiling}, or simply a {\em tiling}, of $\mathit{AD}_{n}$ is a collection
of non-overlapping dominoes which exactly covers $\mathit{AD}_{n}$.
Figure \ref{fig:AD_tiling} shows an Aztec diamond and an example of a tiling.
\begin{figure}
  \centering
\includegraphics{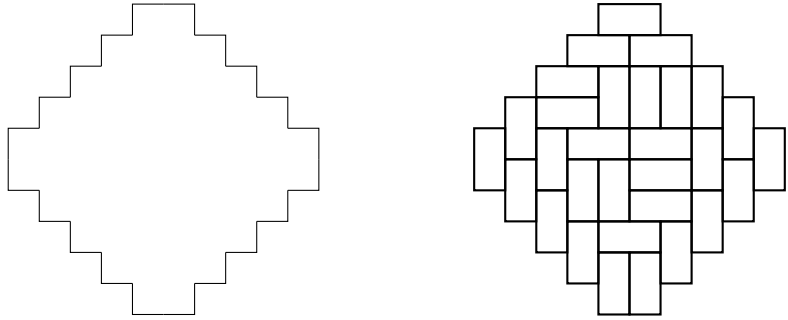}
  \caption{The Aztec diamond $\mathit{AD}_{5}$ (left) and a tiling of $\mathit{AD}_{5}$ (right).}
  \label{fig:AD_tiling}
\end{figure}
Let $T_n$ denote the set of all tilings of $\mathit{AD}_{n}$.
Elkies, Kuperberg, Larsen and Propp,
in their two-parted paper \cite{Elkies-Kuperberg-Larsen-Propp(1992.01),Elkies-Kuperberg-Larsen-Propp(1992.02)},
considered the statistics $v(T)$ and $r(T)$ of a tiling $T$,
where $v(T)$ denotes half the number of vertical dominoes in $T$ and $r(T)$ the {\em rank} of $T$.
(The definition of the rank is explained in Section \ref{sec:ADT}.)
They showed that the counting polynomials
\begin{gather}
  \mathrm{AD}_{n}(t,q) = \sum_{T \in T_n} t^{v(T)} q^{r(T)}, \qquad n \in \mathbb{N},
\end{gather}
admit the following closed-form expression.

\begin{thm}[Aztec diamond theorem
    \cite{Elkies-Kuperberg-Larsen-Propp(1992.01),Elkies-Kuperberg-Larsen-Propp(1992.02)}]
  \label{thm:ADT}
  For $n \in \mathbb{N}$,
  \begin{gather} \label{eq:ADT}
    \mathrm{AD}_{n}(t,q) = \prod_{k=0}^{n-1} (1 + t q^{2k+1})^{n-k}.
  \end{gather}
\end{thm}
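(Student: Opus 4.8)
The plan is to convert the tiling-counting polynomial $\mathrm{AD}_n(t,q)$ into a determinant of $q$-Narayana type and then evaluate that determinant by the Laurent biorthogonal machinery set up in Sections \ref{sec:qNarPolMom} and \ref{sec:ToeplitzDets}. First I would invoke the Eu--Fu correspondence to encode each tiling $T \in T_n$ as a family of non-intersecting Schr\"oder paths with prescribed sources and sinks, arranged so that the vertical-domino count $v(T)$ is recorded by the total number of level steps of the family and the rank $r(T)$ by its total (shifted) area. The weight $t^{v(T)} q^{r(T)}$ should thereby factor into a product of per-path weights of the form $t^{\mathrm{level}(P)} q^{\mathrm{area}(P)}$ times an explicit $q$-power depending only on the fixed endpoints.

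Second I would apply the Lindstr\"om--Gessel--Viennot lemma \cite{Gessel-Viennot(1985)}: since the paths are forced to be non-intersecting, the signed sum over path families collapses onto a single permutation, so $\mathrm{AD}_n(t,q)$ equals, up to an explicit overall $q$-power, the determinant of a matrix whose $(i,j)$ entry is the weighted generating function of a single Schr\"oder path between the $i$-th source and the $j$-th sink. The purpose of choosing the Eu--Fu endpoints along parallel diagonals is that these single-path generating functions depend only on the difference $j-i$, so the matrix is T\"oplitz and its entries are exactly the suitably normalised $q$-Narayana polynomials $N_k(t,q)$ studied in Section \ref{sec:qNarPolMom}.

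Third I would feed this T\"oplitz determinant into the evaluation obtained in Section \ref{sec:ToeplitzDets}. Because the $N_k(t,q)$ are the moments of the LBPs constructed in Section \ref{sec:qNarPolMom}, the determinant should factor through the T-fraction (recurrence) coefficients of those polynomials, which is the Laurent-biorthogonal analogue of the classical identity expressing a Hankel determinant of orthogonal moments as a product of squared norms. Reading off these coefficients and reassembling the product, together with the residual $q$-power from the first step, should collapse to $\prod_{k=0}^{n-1}(1 + t q^{2k+1})^{n-k}$.

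I expect the main obstacle to be the bookkeeping in the first two steps: fixing the sources and sinks so that the Eu--Fu paths are simultaneously non-intersecting, translation-invariant enough to render the LGV matrix genuinely T\"oplitz, and weighted so that the area statistic reproduces the rank $r(T)$ with precisely the exponents $2k+1$ that appear in the final product. Once the determinant is correctly identified with the one treated in Section \ref{sec:ToeplitzDets}, its value is forced by the LBP recurrence, so the analytic content is essentially complete; what remains is the algebraic simplification of the resulting product and the careful tracking of the constant $q$-prefactor.
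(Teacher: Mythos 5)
Your proposal follows essentially the same route as the paper: the Eu--Fu bijection translating $v(T)$ and $r(T)$ into level-step and (shifted) area statistics of non-intersecting Schr\"oder paths, Gessel--Viennot to identify the resulting sum with a T\"oplitz determinant of $q$-Narayana polynomials, and the Laurent-biorthogonal moment machinery to evaluate that determinant in product form. The only cosmetic difference is bookkeeping you already flag (in the paper $v(T)$ is complementary to the level-step count, giving a $t^{-\mathrm{level}(\bm{P})}$ weight absorbed by an overall $t^{n(n+1)/2}$), so the argument is correct as proposed.
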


Especially, the number $\# T_n$ of possible tilings of $\mathit{AD}_{n}$ equals to
\begin{gather} \label{eq:ADT11}
  \# T_n = \mathrm{AD}_{n}(1,1) = 2^{\frac{n(n+1)}{2}}.
\end{gather}
(That is the solution to Exercise 6.49b in Stanley's book \cite{Stanley(1999EC2)}.)
In \cite{Elkies-Kuperberg-Larsen-Propp(1992.01),Elkies-Kuperberg-Larsen-Propp(1992.02)},
a proof by means of the domino shuffling is shown for \eqref{eq:ADT}
as well as three different proofs for \eqref{eq:ADT11}.
Further different proofs for \eqref{eq:ADT11} are given by several authors
\cite{Ciucu(1996),Kuo(2004),Brualdi-Kirkland(2005),Eu-Fu(2005)}.
In particular, Eu and Fu \cite{Eu-Fu(2005)} gives a proof of \eqref{eq:ADT11}
by calculating Hankel determinants of the large and the small Schr\"oder numbers.
They showed a one-to-one correspondence between tilings and tuples of non-intersecting Schr\"oder paths
to apply Gessel--Viennot's lemma \cite{Gessel-Viennot(1985),Aigner(2001LNCS)}
on non-intersecting paths and determinants.
In this paper, we give a new proof to \eqref{eq:ADT} based on the correspondence developed by Eu and Fu.
Clarifying the connection between the statistics $v(T)$ and $r(T)$ of tilings $T$
and the statistics $\mathrm{level}(P)$ and $\mathrm{area}(P)$ of Schr\"oder paths $P$,
we reduce the proof to the calculation of a determinant of the $q$-Narayana polynomials.

This paper is organized as follows.
In Section \ref{sec:LBPs_TFrac}, we recall the definitions and the fundamentals of LBPs and T-fractions
focusing on the moments and a moment determinant of the T\"oplitz form.
Sections \ref{sec:momPaths}--\ref{sec:nonIntPaths} concern a combinatorial interpretation of LBPs
in terms of Schr\"oder paths which is applicable to general families of LBPs.
In Section \ref{sec:momPaths},
we exhibit a combinatorial expression of the moments of LBPs (Theorem \ref{thm:momPaths})
with two different proofs.
In Section \ref{sec:nonIntPaths},
we show a combinatorial expression of the moment determinant in terms of non-intersecting Schr\"oder paths
(Theorem \ref{thm:detInNIPaths}) based on Gessel--Viennot's methodology \cite{Gessel-Viennot(1985),Aigner(2001LNCS)}.

Sections \ref{sec:qNarPolMom}--\ref{sec:ADT} concern the special case of the moments of LBPs
given by the $q$-Narayana polynomials.
In Section \ref{sec:qNarPolMom}, we find the LBPs whose moments are given by the $q$-Narayana polynomials
(Theorem \ref{thm:NarayanaPolsInMoms}).
In Section \ref{sec:ToeplitzDets}, we evaluate a determinant of the $q$-Narayana polynomials
by calculating the moment determinant of LBPs (Theorem \ref{thm:DetNarayanaPols}).
Finally, in Section \ref{sec:ADT}, we give a new proof of the Aztec diamond theorem
based on the discussions in the foregoing sections about LBPs, Schr\"oder paths and the $q$-Narayana polynomials.
Section \ref{sec:conclusions} is devoted to concluding remarks.

\section{Laurent biorthogonal polynomials and T-fractions}
\label{sec:LBPs_TFrac}

In Section \ref{sec:LBPs_TFrac}, we recall the definition and the fundamentals of LBPs and T-fractions.
See, e.g., \cite{Jones-Thron(1982),Hendriksen-VanRossum(1986),Zhedanov(1998)} for more details.
The formulations of LBPs may differ depending on the authors though they are essentially equivalent.
In this paper, we adopt the formulation in \cite{Zhedanov(1998)}.

\subsection{Laurent biorthogonal polynomials}
\label{sec:LBPs}

Let $b_{n+1}$ and $c_{n}$ for $n \in \mathbb{N}$ be arbitrary nonzero constants.
The (monic) {\em Laurent biorthogonal polynomials (LBPs)} $P_n(z)$, $n \in \mathbb{N}$, is the polynomials
determined from the recurrence
\begin{gather} \label{eq:recurrence}
  P_{n+1}(z) = (z - c_n) P_n(z) - b_n z P_{n-1}(z) \qquad \text{for $n \ge 1$}
\end{gather}
with the initial values $P_{0}(z) = 1$ and $P_1(z) = z - c_0$.
The first few of the LBPs are
\begin{subequations}
  \begin{align}
    P_0(z) &= 1, \\
    P_1(z) &= z - c_0, \\
    P_2(z) &= z^2 - (b_1 + c_0 + c_1) z + c_0 c_1, \\
    P_3(z) &= z^3 - (b_1 + b_2 + c_0 + c_1 + c_2) z^2 \nonumber \\
           & \qquad\qquad {} + (b_1 c_2 + b_2 c_0 + c_0 c_1 + c_0 c_2 + c_1 c_2) z - c_0 c_1 c_2.
  \end{align}
\end{subequations}
The LBP $P_n(z)$ is a monic polynomial in $z$ exactly of degree $n$ of which the constant term does not vanish.
In fact,
\begin{gather} \label{eq:LBPConstTerm}
  P_n(0) = (-1)^{n} \prod_{j=0}^{n-1} c_j \neq 0.
\end{gather}

The orthogonality of LBPs is described in the following theorem, that is sometimes referred to
by {\em Favard type theorem}.

\begin{thm}[Favard type theorem for LBPs] \label{thm:Favard}
  There exists a linear functional $\mathcal{F}$ defined over Laurent polynomials in $z$
  with respect to which the LBPs $P_n(z)$ satisfy the orthogonality
  \begin{gather} \label{eq:orthty}
    \mathcal{F}[P_n(z) z^{-k}] = h_n \delta_{n,k} \qquad \text{for $0 \le k \le n$}
  \end{gather}
  with some constants $h_n \neq 0$, where $\delta_{n,k}$ denotes the Kronecker delta.
  The linear functional $\mathcal{F}$ is unique up to a constant factor.
\end{thm}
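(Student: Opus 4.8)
The plan is to realize $\mathcal{F}$ through its moments $\mu_m = \mathcal{F}[z^m]$, $m \in \mathbb{Z}$, since a linear functional on Laurent polynomials is completely determined by its values on the monomial basis. Writing $P_n(z) = \sum_{j=0}^{n} p_{n,j} z^{j}$ with $p_{n,n} = 1$ and $p_{n,0} = P_n(0) \neq 0$ by \eqref{eq:LBPConstTerm}, the orthogonality relation \eqref{eq:orthty} becomes the linear system $\sum_{j=0}^{n} p_{n,j}\mu_{j-k} = h_n\delta_{n,k}$ in the $\mu_m$. The key bookkeeping observation is that $\mu_n$ enters only the relation $(n,k)=(n,0)$ (through $p_{n,n}=1$) and $\mu_{-(n-1)}$ only the relation $(n,k)=(n,n-1)$ (through $p_{n,0}\neq0$). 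First I would fix $\mu_0=h_0$ to be an arbitrary nonzero constant and then determine the moments recursively for $n=1,2,\dots$: solve $(n,0)$ for $\mu_n$ and $(n,n-1)$ for $\mu_{-(n-1)}$. This produces, for each choice of $\mu_0$, a single candidate functional whose moments are all scalar multiples of $\mu_0$, so uniqueness up to a constant factor is automatic once consistency is established.

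The substance lies in checking that this candidate satisfies \emph{every} relation in \eqref{eq:orthty}, not only the two used to define the moments, and that each $h_n\neq0$; this is where the recurrence \eqref{eq:recurrence} is indispensable. I would prove by induction on $n$ the statement that $\mathcal{F}[P_m z^{-k}] = 0$ for $0 \le k \le m-1$ and $\mathcal{F}[P_m z^{-m}] = h_m$ for all $m \le n$, the cases $n=0,1$ being immediate. For $n\ge2$, applying $\mathcal{F}$ to $P_n(z) z^{-k} = (z - c_{n-1})P_{n-1}(z)z^{-k} - b_{n-1}P_{n-2}(z)z^{-(k-1)}$ gives
\[
\mathcal{F}[P_n z^{-k}] = \mathcal{F}[P_{n-1} z^{-(k-1)}] - c_{n-1}\mathcal{F}[P_{n-1}z^{-k}] - b_{n-1}\mathcal{F}[P_{n-2} z^{-(k-1)}].
\]
For $1 \le k \le n-2$ each term on the right is an off-diagonal quantity at level $n-1$ or $n-2$ and hence vanishes by the inductive hypothesis, so $\mathcal{F}[P_n z^{-k}] = 0$ exactly as required. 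One has to verify that these intermediate relations involve no not-yet-determined moment, which holds because the indices occurring lie in $[-(n-2),n-1]$.

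The same identity at $k=n-1$ is what fixes $h_{n-1}$: there the first term vanishes while the other two are diagonal, giving $0 = -c_{n-1}h_{n-1} - b_{n-1}h_{n-2}$, hence $h_{n-1} = -(b_{n-1}/c_{n-1})h_{n-2}$. Since $b_{n-1},c_{n-1}\neq0$ and $h_0=\mu_0\neq0$, an immediate induction yields $h_n = \mu_0\prod_{j=1}^{n}(-b_j/c_j)\neq0$, which is precisely the non-degeneracy claimed. Equivalently, this is the relation $(n,n-1)$ used above to determine $\mu_{-(n-1)}$, and the recurrence shows it is consistent and simultaneously pins down $h_{n-1}$.

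The hard part is therefore not constructing the moments but proving that the over-determined system \eqref{eq:orthty} is consistent: there are $n$ vanishing relations at level $n$ but only two new moments to absorb them. The three-term recurrence is exactly the mechanism that collapses the $n-2$ intermediate relations to identities and reduces the diagonal normalization to the product formula for $h_n$; arranging the induction so that it never appeals to a moment not yet defined is the one delicate point. Uniqueness then follows as noted: any $\mathcal{F}$ obeying \eqref{eq:orthty} must satisfy $h_0=\mathcal{F}[1]=\mu_0\neq0$ and must have moments obeying the same recursion, so it is a constant multiple of the constructed functional.
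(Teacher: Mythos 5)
Your proposal is correct, and it is essentially the argument the paper itself invokes: the paper gives no proof beyond pointing to the classical Favard theorem (Chihara, Chapter I, Theorem 4.4), and your construction---defining the moments $\mu_m$ recursively from the relations $(n,0)$ and $(n,n-1)$ (using $p_{n,n}=1$ and $P_n(0)\neq 0$ from \eqref{eq:LBPConstTerm}), then using the three-term recurrence \eqref{eq:recurrence} to verify the remaining off-diagonal relations and to derive $h_n=-(b_n/c_n)h_{n-1}\neq 0$---is precisely the standard adaptation of that argument to the Laurent-biorthogonal setting. The only cosmetic blemishes are the degenerate overlap at $n=1$ (where $(1,0)$ determines $\mu_1$ and $\mu_0$ is already fixed) and the fact that the induction is cleanest if one first defines all moments and only afterwards verifies the full system; neither affects correctness.
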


We can prove Theorem \ref{thm:Favard} in almost the same way as Favard's theorem for orthogonal polynomials.
See, e.g., Chihara's book \cite[Chapter I, Theorem 4.4]{Chihara(1978OP)}.

We write the {\em moments} of the linear functional $\mathcal{F}$,
\begin{gather} \label{eq:mom}
  f_k = \mathcal{F}[z^k], \qquad k \in \mathbb{Z}.
\end{gather}
We fix the first moment $f_1 = \mathcal{F}[z]$ by
\begin{gather}
  f_1 = \kappa
\end{gather}
where $\kappa$ is an arbitrary nonzero constant.
We can show that the {\em moment determinant} of T\"oplitz form
\begin{gather}
  \Delta^{(s)}_{n} = \det(f_{s-j+k})_{j,k=0,\ldots,n-1} = {}
  \begin{vmatrix}
    f_{s}     & f_{s+1}   & \cdots & f_{s+n-1} \\
    f_{s-1}   & f_{s}     & \cdots & f_{s+n-2} \\
    \vdots    & \vdots    & \ddots & \vdots    \\
    f_{s-n+1} & f_{s-n+2} & \cdots & f_{s}     \\
  \end{vmatrix}
\end{gather}
does not vanish for $s \in {\{ 0, 1 \}}$ and $n \in \mathbb{N}$.
The LBPs $P_n(z)$ have the determinant expression
\begin{gather}
  P_n(z) = \frac{1}{\Delta^{(0)}_{n}}
  \begin{vmatrix}
    f_0      & f_1      & \cdots & f_{n-1} & f_n     \\
    f_{-1}   & f_0      & \cdots & f_{n-2} & f_{n-1} \\
    \vdots   & \vdots   & \ddots & \vdots  & \vdots  \\
    f_{-n+1} & f_{-n+2} & \cdots & f_0     & f_1     \\
    1        & z        & \cdots & z^{n-1} & z^n     \\
  \end{vmatrix}.
\end{gather}
Thus, from \eqref{eq:recurrence} and \eqref{eq:orthty},
the coefficients $b_n$ and $c_n$ of the recurrence \eqref{eq:recurrence}
and the constants $h_n$ in the orthogonality \eqref{eq:orthty}
\begin{gather} \label{eq:cfsInDets}
  b_n = -\frac{\Delta^{(1)}_{n+1} \Delta^{(0)}_{n-1}}{\Delta^{(1)}_{n} \Delta^{(0)}_{n}}, \qquad
  c_n = \frac{\Delta^{(1)}_{n+1} \Delta^{(0)}_{n}}{\Delta^{(1)}_{n} \Delta^{(0)}_{n+1}}, \qquad
  h_n = \frac{\Delta^{(0)}_{n+1}}{\Delta^{(0)}_{n}}.
\end{gather}

The inverted polynomials
\begin{gather}
  \tilde{P}_n(z) = \frac{z^{n} P_n(z^{-1})}{P_n(0)}
\end{gather}
also make a family of LBPs which are determined by the recurrence \eqref{eq:recurrence}
with the different coefficients
\begin{gather} \label{eq:bc_arginv}
  \tilde{b}_{n} = \frac{b_n}{c_{n-1} c_{n}}, \qquad
  \tilde{c}_{n} = \frac{1}{c_n}.
\end{gather}
We can determine a linear functional $\tilde{\mathcal{F}}$ for $\tilde{P}_n(z)$ by the moments
\begin{gather} \label{eq:momDual}
  \tilde{f}_k = \tilde{\mathcal{F}}[z^k] = f_{1-k}, \qquad k \in \mathbb{Z}.
\end{gather}
Then,
\begin{gather} \label{eq:kappa-kappaTilde}
  \tilde{f}_{1} = \tilde{\kappa} := f_{0} = \frac{\kappa}{c_0}.
\end{gather}

\begin{subequations} \label{eq:detsInCfs}
  The equations \eqref{eq:cfsInDets} and \eqref{eq:bc_arginv} imply that
  \begin{align}
  \Delta^{(1)}_{n} &= {}
    (-1)^{\frac{n(n-1)}{2}} \kappa^{n}
    \prod_{k=1}^{n-1} {\left( \frac{b_{k}}{c_{k-1}} \right)}^{n-k},
    \label{eq:detsInCfs01} \\
    \Delta^{(0)}_{n} &= {}
    (-1)^{\frac{n(n-1)}{2}} \tilde{\kappa}^{n}
    \prod_{k=1}^{n-1} {\left( \frac{\tilde{b}_{k}}{\tilde{c}_{k-1}} \right)^{n-k}}.
    \label{eq:detsInCfs00}
  \end{align}
\end{subequations}
In Section \ref{sec:ToeplitzDets}, we make use of the formulae \eqref{eq:detsInCfs}
to compute the moment determinant $\Delta^{(s)}_{n}$.

\subsection{T-fractions}

A {\em T-fraction} is a continued fraction
\begin{gather} \label{eq:T-fraction}
  T(z) = \CFrac{\kappa}{z - c_0} - \CFrac{b_1 z}{z - c_1} - \CFrac{b_2 z}{z - c_2} - \cdots.
\end{gather}
The $n$-th convergent of $T(z)$
\begin{gather}
  T_n(z) = \CFrac{\kappa}{z - c_0} - \CFrac{b_1 z}{z - c_1} - \cdots - \CFrac{b_{n-1} z}{z - c_{n-1}}
\end{gather}
is expressed by a ratio of polynomials
\begin{gather}
  T_n(z) = \frac{Q_n(z)}{P_n(z)}
\end{gather}
where $P_n(z)$ is the LBP of degree $n$ determined by the recurrence \eqref{eq:recurrence},
and $Q_n(z)$ is the polynomial determined by the same recurrence \eqref{eq:recurrence}
from different initial values $Q_0(z) = 0$ and $Q_1(z) = \kappa$.
Thus, we can identify the LBP $P_n(z)$ with the denominator polynomial of $T_n(z)$.

In Pad\'e approximants, the convergent $T_n(z)$ simultaneously approximates two formal power series
\begin{gather} \label{eq:momSeries}
  F_{+}(z) = \sum_{k=1}^{\infty} f_k z^{-k}
  \qquad \text{and} \qquad
  F_{-}(z) = -\sum_{k=0}^{\infty} f_{-k} z^k
\end{gather}
in the sense that
\begin{subequations} \label{eq:PadeApprox}
  \begin{align}
    T_n(z) &= F_{+}(z) + \mathop{\mathrm{O}}(z^{-n-1}) && \text{as $z \to \infty;$} \\[1\jot]
        {} &= F_{-}(z) + \mathop{\mathrm{O}}(z^{n})    && \text{as $z \to 0,$}
  \end{align}
\end{subequations}
where $f_k = \mathcal{F}[z^k]$ are the moments of the LBPs $P_n(z)$.
That is, expanded into series at $z=\infty$ and at $z=0$,
$T_n(z) = Q_n(z)/P_n(z)$ coincide with $F_+(z)$ and $F_-(z)$, respectively,  at least in the first $n$ terms.
The approximation \eqref{eq:PadeApprox} of $F_{+}(z)$ and $F_{-}(z)$ by $T_n(z)$ is equivalent
to the orthogonality \eqref{eq:orthty} of LBPs.

Taking the limit $n \to \infty$ in \eqref{eq:PadeApprox},
we observe that the T-fraction $T(z)$ equals to $F_{+}(z)$ and $F_{-}(z)$ as formal power series,
\begin{subequations} \label{eq:TFracSeries}
  \begin{align}
    T(z) &= F_{+}(z) && \text{as $z \to \infty;$} \\[1\jot]
      {} &= F_{-}(z) && \text{as $z \to 0.$}
  \end{align}
\end{subequations}

\section{Moments and Schr\"oder paths}
\label{sec:momPaths}

In Section \ref{sec:momPaths}, we give a combinatorial interpretation to the moments of LBPs.
Theorem \ref{thm:momPaths} of expressing each moment in terms of Schr\"oder paths is already shown 
in \cite[Theorem 8]{Kamioka(2007)}.
In this paper, we review the result by providing two new simple proofs.
The lattice path interpretation of LBPs is quite analogous
to those in the combinatorial interpretation of orthogonal polynomials by Viennot \cite{Viennot(1983OP)}.
We owe the idea of the proof in Section \ref{sec:proofTFrac} by T-fractions
to a combinatorial interpretation of continued fractions by Flajolet \cite{Flajolet(1980)}.

Let $P$ be a Schr\"oder path.
We label each step in $P$ by unity if the step is an up step,
by $b_n$ if a down step descending from the line $y=n$
and by $c_n$ if a level step on the line $y=n$,
where $b_n$ and $c_n$ are the coefficients of the recurrence \eqref{eq:recurrence} of the LBPs $P_n(z)$.
We then define the {\em weight} $w(P)$ of $P$ by the product of the labels of all the steps in $P$.
For example, the path in Figure \ref{fig:SchPath} weighs $w(P) = b_1^2 b_2^3 b_3 c_0 c_1 c_2^2$.
In the same way, labeling each step in $P$
using the recurrence coefficients $\tilde{b}_{n}$ and $\tilde{c}_{n}$ for $\tilde{P}_n(z)$,
we define another weight $\tilde{w}(P)$.
The main statement in Section \ref{sec:momPaths} is the following.

\begin{thm} \label{thm:momPaths}
  The moments $f_k = \mathcal{F}[z^k]$ of LBPs admit the expressions
  \begin{subequations} \label{eq:momentsPaths}
    \begin{align}
      f_{k+1} &= \kappa \sum_{P \in S_k} w(P),
      \label{eq:momPathsPos} \\
      f_{-k} &= \tilde{\kappa} \sum_{P \in S_k} \tilde{w}(P) && \text{for $k \in \mathbb{N}$.}
      \label{eq:momPathsNeg}
    \end{align}
  \end{subequations}
\end{thm}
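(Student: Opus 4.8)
The plan is to prove the single identity \eqref{eq:momPathsPos} for an arbitrary family of LBPs and then obtain \eqref{eq:momPathsNeg} for free by the duality already set up in the excerpt. Indeed, by \eqref{eq:momDual} the inverted polynomials $\tilde{P}_n(z)$ form a family of LBPs with recurrence coefficients $\tilde{b}_n,\tilde{c}_n$ from \eqref{eq:bc_arginv}, with moments $\tilde{f}_k = f_{1-k}$, and with first moment $\tilde{\kappa} = f_0$ by \eqref{eq:kappa-kappaTilde}. Since $\tilde{f}_{k+1} = f_{-k}$ and the path weight $\tilde{w}$ is built from $\tilde{b}_n,\tilde{c}_n$ exactly as $w$ is built from $b_n,c_n$, applying \eqref{eq:momPathsPos} to the tilde family gives $\tilde{f}_{k+1} = \tilde{\kappa}\sum_{P\in S_k}\tilde{w}(P)$, which is precisely \eqref{eq:momPathsNeg}. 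So the whole of Theorem \ref{thm:momPaths} follows from \eqref{eq:momPathsPos}.

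\textbf{From the T-fraction to a path-counting continued fraction.} To prove \eqref{eq:momPathsPos} I would start from \eqref{eq:TFracSeries}, which expands the T-fraction \eqref{eq:T-fraction} at $z=\infty$ as $F_{+}(z)=\sum_{k\ge 1} f_k z^{-k}$, so that $f_{k+1}/\kappa$ is the coefficient of $z^{-(k+1)}$ in $T(z)/\kappa$. After the substitution $x = 1/z$ and a routine normalization of each level of \eqref{eq:T-fraction}, the T-fraction becomes the Jacobi-type continued fraction
\begin{gather*}
  \frac{z\,T(z)}{\kappa} \;=\; \cfrac{1}{1 - c_0 x - \cfrac{b_1 x}{1 - c_1 x - \cfrac{b_2 x}{1 - c_2 x - \cdots}}} \;=\; \sum_{k\ge 0} \frac{f_{k+1}}{\kappa}\, x^{k}.
\end{gather*}
The essential point is that each numerator $b_n$ now carries a single power of $x$, which is the signature of Schr\"oder paths rather than Motzkin paths.

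\textbf{Reading off the paths.} The combinatorial heart is Flajolet's interpretation of such continued fractions as generating functions of weighted lattice paths. The dictionary assigns weight $x$ to each up step, weight $c_n x$ to a level step on the line $y=n$, and weight $b_n$ to a down step descending from $y=n$; thus $x$ marks the total number of up and level steps, which equals $k$ for every path in $S_k$ because such a path has equally many up and down steps. Multiplying the step weights along a path yields $x^{k}\,w(P)$, so the coefficient of $x^k$ in the continued fraction is $\sum_{P\in S_k} w(P)$, and comparison with the series above gives $f_{k+1} = \kappa \sum_{P\in S_k} w(P)$. For a self-contained variant that does not quote Flajolet's theorem, I would introduce the height-shifted path generating functions $\Phi_n(x)$ and prove the first-return functional equation $\Phi_n = 1 + c_n x\,\Phi_n + b_{n+1} x\,\Phi_{n+1}\Phi_n$, whose unfolding reproduces exactly the displayed continued fraction.

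\textbf{Main obstacle.} The difficulty I anticipate is bookkeeping rather than conceptual. One must pin down precisely how the length variable $x$ is distributed among up, down and level steps so that the normalized continued fraction matches both the moment series \eqref{eq:momSeries} and the path weighting, and one must check that the height-indexing of the labels $b_n$ (descents from $y=n$) and $c_n$ (levels on $y=n$) is consistent with the height-indexing of the recurrence coefficients appearing at successive levels of \eqref{eq:T-fraction}. Once this dictionary is fixed, the extraction of coefficients and the duality step are routine.
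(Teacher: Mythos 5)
Your proposal is correct and follows essentially the same route as the paper's second proof (Section \ref{sec:proofTFrac}): the duality reduction via \eqref{eq:bc_arginv}--\eqref{eq:kappa-kappaTilde} mirrors how the paper derives \eqref{eq:TFracPathsZero} from \eqref{eq:TFracPathsInfty}, and your first-return functional equation $\Phi_n = 1 + c_n x\,\Phi_n + b_{n+1}x\,\Phi_{n+1}\Phi_n$ is exactly the paper's identity $S_{m,n}(z) = \{z - c_m - b_{m+1} z S_{m+1,n-1}(z)\}^{-1}$ after the substitution $x = 1/z$, proved there by induction on the partial convergents $T_{m,n}(z)$. The paper additionally gives a first proof working directly from the orthogonality via a recurrence for $f_{n,k} = \mathcal{F}[P_n(z)z^{k+1}]$, but your Flajolet-style argument matches its T-fraction proof.
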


For example,
\begin{subequations}
  \begin{align}
    f_{-2} &= \tilde{\kappa} (\tilde{b}_1 \tilde{b}_2 + \tilde{b}_1^2 + \tilde{b}_1 \tilde{c}_1 + {}
    2 \tilde{b}_1 \tilde{c}_0 + \tilde{c}_0^2), \\
    f_{-1} &= \tilde{\kappa} (\tilde{b}_1 + \tilde{c}_0), \\
     f_{0} &= \tilde{\kappa}, \\
     f_{1} &= \kappa, \\
     f_{2} &= \kappa (b_1 + c_0), \\
     f_{3} &= \kappa (b_1 b_2 + b_1^2 + b_1 c_1 + 2 b_1 c_0 + c_0^2).
  \end{align}
\end{subequations}

In the rest of Section \ref{sec:momPaths}, we show two different proofs of Theorem \ref{thm:momPaths}.
The first proof in Section \ref{sec:proofLBPs} is based on LBPs.
The second proof in Section \ref{sec:proofTFrac} is based on T-fractions.

\subsection{Proof of Theorem \ref{thm:momPaths} by LBPs}
\label{sec:proofLBPs}

\begin{lem} \label{lem:momPathsGen}
  For $n \in \mathbb{N}$ and $k \in \mathbb{N}$,
  \begin{subequations} \label{eq:fvodsjcb}
    \begin{align}
      \mathcal{F}[P_{n}(z) z^{k+1}] &= \kappa \sum_{P} w(P),
      \label{eq:momPathsGen01} \\
      \tilde{\mathcal{F}}[\tilde{P}_{n}(z) z^{k+1}] &= \tilde{\kappa} \sum_{P} \tilde{w}(P)
      \label{eq:momPathsGen00}
    \end{align}
  \end{subequations}
  where both the sums range over all Schr\"oder paths $P$ from $(-n,-n)$ to $(2k,0)$.
\end{lem}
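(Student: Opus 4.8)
The plan is to prove the two identities in \eqref{eq:fvodsjcb} in parallel. Since $\tilde P_n(z)$ is itself a family of LBPs, with recurrence coefficients $\tilde b_n,\tilde c_n$ given by \eqref{eq:bc_arginv} and dual functional $\tilde{\mathcal F}$ normalised by $\tilde f_1=\tilde\kappa$, the identity \eqref{eq:momPathsGen00} is exactly \eqref{eq:momPathsGen01} with $(P_n,\mathcal F,\kappa,w)$ replaced by $(\tilde P_n,\tilde{\mathcal F},\tilde\kappa,\tilde w)$. Hence it suffices to prove \eqref{eq:momPathsGen01} for an arbitrary family of LBPs, and \eqref{eq:momPathsGen00} follows by specialisation. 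Write $A_n(k)=\mathcal F[P_n(z)z^{k+1}]$ for the left-hand side and $g_n(k)=\sum_P w(P)$ for the weighted count of the Schr\"oder paths from $(-n,-n)$ to $(2k,0)$ appearing in the statement; the goal is $A_n(k)=\kappa\,g_n(k)$ for all $n,k\in\mathbb N$.

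First I would convert the three-term recurrence \eqref{eq:recurrence} into a recurrence for the mixed moments. Writing it as $zP_n(z)=P_{n+1}(z)+c_nP_n(z)+b_n zP_{n-1}(z)$ (for $n=0$ one uses $P_1(z)=z-c_0$ directly, so the $b_0$-term is absent), multiplying by $z^{k}$ and applying $\mathcal F$ yields
\[
  A_n(k)=A_{n+1}(k-1)+c_n A_n(k-1)+b_n A_{n-1}(k),\qquad n\ge 0,
\]
with the convention $A_{-1}(\cdot)=0$. For the initial data I would invoke the orthogonality \eqref{eq:orthty} at $k=0$, which gives $A_m(-1)=\mathcal F[P_m(z)]=h_m\delta_{m,0}$, together with $h_0=f_0=\tilde\kappa$ from \eqref{eq:kappa-kappaTilde}, so that $A_m(-1)=\tilde\kappa\,\delta_{m,0}$.

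On the combinatorial side the same recurrence holds by splitting off the first step of $P$ at its initial vertex: according as that step is up, level, or down, the three cases are weighted by $1$, $c_n$, $b_n$ and leave paths counted respectively by $g_{n+1}(k-1)$, $g_n(k-1)$, $g_{n-1}(k)$, whence
\[
  g_n(k)=g_{n+1}(k-1)+c_n g_n(k-1)+b_n g_{n-1}(k).
\]
Thus $A_n(k)$ and $\kappa\,g_n(k)$ satisfy the same recurrence. I would then finish by a double induction: an outer induction on $k$, whose base $k=0$ is computed directly from the moment recurrence and the anchor $A_m(-1)=\tilde\kappa\,\delta_{m,0}$ (only the staircase path survives, giving $A_n(0)=c_0\tilde\kappa\prod_{j=1}^{n}b_j=\kappa\prod_{j=1}^{n}b_j=\kappa\,g_n(0)$), and, within each fixed $k$, an inner induction on $n$ that disposes of the term $b_nA_{n-1}(k)$ carrying the same superscript.

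The main obstacle is precisely this term $b_nA_{n-1}(k)$: a down step does not lower the power of $z$, so the moment recurrence is \emph{not} a plain transfer in the single variable $k$ and cannot be iterated in one pass. This is exactly what forces the nested induction and, combinatorially, what makes the correct model Schr\"oder paths with width-$2$ level steps started from the shifted point $(-n,-n)$ rather than ordinary Motzkin paths; it is also the origin of the passage from $\tilde\kappa$ to $\kappa$ through the boundary factor $c_0$ at level $0$. Checking that the first-step decomposition reproduces the moment recurrence term by term is routine; the one genuinely delicate point is that the orthogonality seed lives at the exceptional superscript $k=-1$, outside the range of the path formula, and must be propagated through the recurrence to generate all the values for $k\ge 0$.
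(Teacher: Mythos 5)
Your proposal is correct and follows essentially the same route as the paper: it converts the three-term recurrence into the mixed-moment recurrence $f_{n,k}=f_{n+1,k-1}+c_nf_{n,k-1}+b_nf_{n-1,k}$ with boundary data $f_{-1,k}=0$, $f_{n,-1}=\tilde\kappa\delta_{n,0}$, matches it against the first-step decomposition of the paths, and treats the tilde case by the substitution $(b_n,c_n,\kappa)\mapsto(\tilde b_n,\tilde c_n,\tilde\kappa)$. You merely make explicit the double induction and the $k=0$ anchor (where $\tilde\kappa c_0=\kappa$ emerges) that the paper leaves implicit; note also that your recurrence is consistent with the paths starting at $(-n,n)$, which is evidently what the statement's ``$(-n,-n)$'' intends.
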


\begin{proof}
  Let us write $f_{n,k} = \mathcal{F}[P_n(z) z^{k+1}]$.
  From the recurrence \eqref{eq:recurrence} of $P_n(z)$, we obtain a recurrence of $f_{n,k}$
  \begin{gather} \label{eq:uwcondvvp}
    f_{n,k} = f_{n+1,k-1} + c_n f_{n,k-1} + b_n f_{n-1,k}
  \end{gather}
  for $n \in \mathbb{N}$ and $k \in \mathbb{N}$,
  where the boundary values $f_{-1,k} = 0$ and $f_{n,-1} = \tilde{\kappa} \delta_{n,0}$ are induced
  from \eqref{eq:orthty} and \eqref{eq:kappa-kappaTilde}.
  The recurrence \eqref{eq:uwcondvvp} leads us to a combinatorial expression of \eqref{eq:momPathsGen01},
  \begin{gather}
    f_{n,k} = \tilde{\kappa} c_0 \sum_{P} w(P) = \kappa \sum_{P} w(P)
  \end{gather}
  where the sum ranges over all Schr\"oder paths $P$ from $(-n,-n)$ to $(2k,0)$.
  In much the same way, we can derive \eqref{eq:momPathsGen00}
  using Schr\"oder paths labelled with $\tilde{b}_{n}$ and $\tilde{c}_{n}$.
\end{proof}

From \eqref{eq:mom} and \eqref{eq:momDual},
Theorem \ref{thm:momPaths} is the special case of $n=0$ in Lemma \ref{lem:momPathsGen}.
That completes the proof of Theorem \ref{thm:momPaths} by LBPs.

\subsection{Proof of Theorem \ref{thm:momPaths} by T-fractions}
\label{sec:proofTFrac}

For a Schr\"oder path $P$, we define $\mathrm{length}(P)$ by
the sum of half the number of up and down steps and the number of level steps in $P$.
For example, the path $P$ in Figure \ref{fig:SchPath} is as long as $\mathrm{length}(P) = 10$.

\begin{lem} \label{lem:TFracPaths}
  The T-fraction $T(z)$ admits the expansions into formal power series
  \begin{subequations} \label{eq:TFracPaths}
    \begin{align}
      T(z) &= \kappa \sum_{P} w(P) z^{-\mathrm{length}(P)-1}               && \text{as $z \to \infty;$}
      \label{eq:TFracPathsInfty} \\
        {} &= -\tilde{\kappa} \sum_{P} \tilde{w}(P) z^{\mathrm{length}(P)} && \text{as $z \to 0,$}
      \label{eq:TFracPathsZero}
    \end{align}
  \end{subequations}
  where the both (formal) sums range over all Schr\"oder paths $P$ from $(0,0)$ to some point on the $x$-axis.
\end{lem}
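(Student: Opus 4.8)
The plan is to establish the two expansions in \eqref{eq:TFracPaths} by relating the T-fraction $T(z)$ to the generating function of Schr\"oder paths, following the methodology of Flajolet on continued fractions. First I would recall from the definition of the T-fraction \eqref{eq:T-fraction} that the tail of the continued fraction satisfies a natural self-similar functional equation. Concretely, writing $G(z)$ for the generating function $\kappa \sum_{P} w(P) z^{-\mathrm{length}(P)-1}$ over all Schr\"oder paths from $(0,0)$ returning to the $x$-axis, I would show that $G(z)$ satisfies the same algebraic relation that $T(z)$ does as a formal power series, and then invoke uniqueness of the series solution.

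The key combinatorial step is a first-return decomposition of Schr\"oder paths. Any nonempty Schr\"oder path $P$ from the $x$-axis back to the $x$-axis either begins with a level step on the line $y=0$ (contributing the label $c_0$ and weight factor $z^{-1}$ in $\mathrm{length}$), followed by a shorter such path, or begins with an up step, runs through a path that stays strictly above $y=0$ before its first return via a down step descending to $y=0$ (contributing the label $b_1$), followed again by a shorter path returning to the $x$-axis. The portion strictly above $y=0$ is itself a Schr\"oder path shifted up one level, so its weight is governed by the shifted coefficients $b_{n+1}, c_{n+1}$. This decomposition translates, at the level of generating functions, into exactly the continued-fraction recursion $T^{(n)}(z) = \kappa_n / \bigl(z - c_n - b_{n+1} z\, T^{(n+1)}(z)/\kappa_{n+1}\bigr)$ for the tails, matching \eqref{eq:T-fraction}. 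Tracking the $z$-grading so that each up/down pair and each level step lowers the exponent by one yields the factor $z^{-\mathrm{length}(P)-1}$, and comparing with \eqref{eq:momSeries} and \eqref{eq:TFracSeries} identifies the coefficients as the moments $f_{k+1}$, consistent with Theorem \ref{thm:momPaths}.

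For the expansion \eqref{eq:TFracPathsZero} at $z \to 0$, I would run the parallel argument using the inverted LBPs $\tilde{P}_n(z)$ and their coefficients $\tilde{b}_n, \tilde{c}_n$ from \eqref{eq:bc_arginv}. The relation $\tilde{f}_k = f_{1-k}$ in \eqref{eq:momDual}, together with $F_{-}(z) = -\sum_{k \ge 0} f_{-k} z^k$ in \eqref{eq:momSeries}, shows that expanding $T(z)$ at the origin amounts to the same path enumeration with the dual weights and the sign and grading reversed, which accounts for the leading minus sign and the exponent $\mathrm{length}(P)$ rather than $-\mathrm{length}(P)-1$.

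The main obstacle I anticipate is bookkeeping the grading precisely: one must verify that the self-similar substitution in the continued fraction shifts the path's base level by one while simultaneously shifting the $z$-exponent correctly, so that the recursively generated series agrees term-by-term with the formal expansion of $T(z)$. In particular, confirming that the first-return decomposition is weight-preserving under the level shift (i.e.\ that a path lying above $y=1$ carries exactly the labels $b_{n+1}, c_{n+1}$) and that $\mathrm{length}$ is additive across the decomposition is where the care is needed; once that is in place, uniqueness of the formal power series fixed point finishes the proof.
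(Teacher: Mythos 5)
Your proposal is correct and follows essentially the same route as the paper: a Flajolet-style first-return decomposition of Schr\"oder paths (empty path, initial level step labelled $c_m$, or initial up step with first descent labelled $b_{m+1}$ enclosing a level-shifted subpath) yielding the self-similar functional equation of the T-fraction, with the expansion at $z\to 0$ obtained from the inverted coefficients $\tilde{b}_n,\tilde{c}_n$. The only organizational difference is that the paper makes the ``uniqueness of the fixed point'' step rigorous by truncating to partial convergents $T_{m,n}(z)$ and paths confined to the strip $m\le y\le m+n-1$, then inducting on $n$ and letting $n\to\infty$.
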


\begin{proof}
  Let us consider partial convergents of $T(z)$
  \begin{gather}
    T_{m,n}(z) = \CFrac{1}{z - c_m} - \CFrac{b_{m+1} z}{z - c_{m+1}} - \cdots - \CFrac{b_{m+n-1} z}{z - c_{m+n-1}}
  \end{gather}
  for $m \in \mathbb{N}$ and $n \in \mathbb{N}$, where $T_{m,0}(z) = 0$.
  We first show by induction for $n \in \mathbb{N}$
  that $T_{m,n}(z) = S_{m,n}(z)$ as $n \to \infty$ where $S_{m,n}(z)$ denotes the formal power series
  \begin{gather} \label{eq:uig8vweu}
    S_{m,n}(z) = \sum_{P} w(P) z^{-\mathrm{length}(P)-1}
  \end{gather}
  over all Schr\"oder paths $P$ from $(m,m)$ to some point on the line $y=m$
  which lie in the region bounded by $y=m$ and $y=m+n-1$.
  (Hence, all the points in $P$ have the $y$-coordinates $\ge m$ and $\le m+n-1$.)
  For $n=0$, it is trivial that $S_{m,0}(z) = 0$ because the region in which $P$ may live is empty.
  Hence, $T_{m,0}(z) = S_{m,0}(z) = 0$.

  Suppose that $n \ge 1$.
  We classify Schr\"oder paths $P$ in the sum \eqref{eq:uig8vweu} into three classes:
  (i) the empty path only of one point at $(m,m)$ (without steps) of weight $1$;
  (ii) paths $P_2$ beginning by an up step;
  (iii) paths $P_3$ beginning by a level step.
  Thus,
  \begin{gather} \label{eq:nsiuhgbve}
    S_{m,n}(z) = {}
    z^{-1} + \sum_{P_2} w(P_2) z^{-\mathrm{length}(P_2)-1} + \sum_{P_3} w(P_3) z^{-\mathrm{length}(P_3)-1},
  \end{gather}
  where the sums with respect to $P_2$ and $P_3$ are taken
  over all Schr\"oder paths in the classes (ii) and (iii), respectively.
  Each path $P_2$ in the class (ii) consists of an initial level step on $y=m$, labelled $c_m$,
  and a subpath (maybe empty) from $(m+2,m)$ to some point on $y=m$.
  Hence,
  \begin{gather} \label{eq:osh89ewvh}
    \sum_{P_2} w(P_2) z^{-\mathrm{length}(P_2)-1} = c_m z^{-1} S_{m,n}(z).
  \end{gather}
  Each path $P_3$ in the class (iii), as shown in Figure \ref{fig:pathDecomp},
  uniquely decomposed into four parts:
  (A) an initial up step, labelled unity;
  (B) a subpath (maybe empty) from $(m+1,m+1)$ to some point on $y=m+1$ never going beneath $y=m+1$;
  (C) the first down step descending from $y=m+1$ to $y=m$, labelled $b_{m+1}$;
  (D) a subpath (maybe empty) both of whose initial and terminal points are on $y=m$.
  \begin{figure}
    \centering
\includegraphics{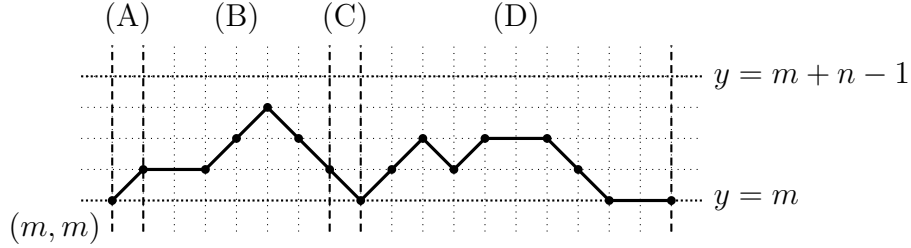}
    \caption{The decomposition of a path $P_3$ in the class (iii) into four parts (A), (B), (C) and (D).}
    \label{fig:pathDecomp}
  \end{figure}
  Hence,
  \begin{gather} \label{eq:nuch87fvf}
    \sum_{P_3} w(P_3) z^{-\mathrm{length}(P_3)-1} = b_{m+1} S_{m+1,n-1}(z) S_{m,n}(z).
  \end{gather}
  Substituting \eqref{eq:osh89ewvh} and \eqref{eq:nuch87fvf} into \eqref{eq:nsiuhgbve}, we get
  \begin{gather}
    S_{m,n}(z) = {\{ z - c_m - b_{m+1} z S_{m+1,n-1}(z) \}^{-1}}.
  \end{gather}
  From the assumption of induction,
  we can assume that $S_{m+1,n-1}(z) = T_{m+1,n-1}(z)$ as $z \to \infty$ and hence
  \begin{gather}
    S_{m,n}(z) = {\{ z - c_m - b_{m+1} z T_{m+1,n-1}(z) \}^{-1}} = T_{m,n}(z) \qquad \text{as $z \to \infty.$}
  \end{gather}

  Now, let us prove Lemma \ref{lem:TFracPaths}.
  In taking the limit $n \to \infty$ of the identity $T_{0,n}(z) = S_{0,n}(z)$ as $z \to \infty$,
  the left-hand side $T_{0,n}(z)$ tends to $T(z)$
  while the right-hand side $S_{0,n}$ to the right-hand side of \eqref{eq:TFracPathsInfty}.
  In order to show \eqref{eq:TFracPathsZero}, we observe from \eqref{eq:bc_arginv}
  that $T_{m,n}(z)$ is equivalent to
  \begin{gather}
    T_{m,n}(z) = {}
    -\CFrac{\tilde{c}_{m} z^{-1}}{z^{-1} - \tilde{c}_{m}} - {}
    \CFrac{\tilde{b}_{m+1} z^{-1}}{z^{-1} - \tilde{c}_{m+1}} - \cdots - {}
    \CFrac{\tilde{b}_{m+n-1} z^{-1}}{z^{-1} - \tilde{c}_{m+n-1}}.
  \end{gather}
  We can thereby show \eqref{eq:TFracPathsZero} as a simple corollary of \eqref{eq:TFracPathsInfty}.
  That completes the proof of Lemma \ref{lem:TFracPaths}.
\end{proof}

The expressions \eqref{eq:momentsPaths} of moments in Theorem \ref{thm:momPaths} are derived
just by equating \eqref{eq:TFracSeries} and \eqref{eq:TFracPaths}.
Indeed, every Schr\"oder path $P$ from $(0,0)$ to some point on the $x$-axis terminates at $(2k,0)$
if and only if $\mathrm{length}(P) = k$.
That completes the proof of Theorem \ref{thm:momPaths} by T-fractions.

\section{Non-intersecting Schr\"oder paths}
\label{sec:nonIntPaths}

In Section \ref{sec:nonIntPaths}, as a consequence of Theorem \ref{thm:momPaths},
we examine the moment determinant $\Delta^{(s)}_{n}$ from a combinatorial viewpoint.
We utilize {\em Gessel--Viennot's lemma} \cite{Gessel-Viennot(1985), Aigner(2001LNCS)}
to read the determinant in terms of non-intersecting paths.

For $m \in \mathbb{N}$ and $n \in \mathbb{N}$,
let $\bm{S}_{m,n}$ denote the set of $n$-tuples $\bm{P} = (P_0,\ldots,P_{n-1})$ of Schr\"oder paths $P_k$
such that (i) $P_{k}$ goes from $(-k,k)$ to $(2m+k,k)$
and that (ii) every two distinct paths $P_j$ and $P_k$, $j \neq k$, are {\em non-intersecting},
namely $P_j \cap P_k = \emptyset$.
As shown in Figure \ref{fig:nonIntPaths},
each $n$-tuple $\bm{P} \in \bm{S}_{m,n}$ can be drawn in a diagram of $n$ non-intersecting Schr\"oder paths
which are pairwise disjoint.
\begin{figure}
  \centering
\includegraphics{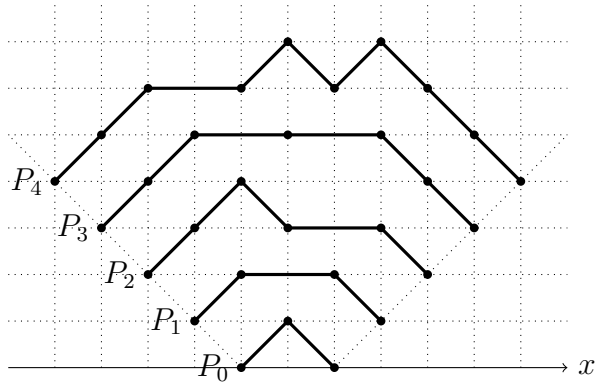}
  \caption{%
    A quintuple $\bm{P} = (P_0,\ldots,P_4) \in \bm{S}_{1,5}$ of non-intersecting Schr\"oder paths
    which is drawn in a plane.
  }
  \label{fig:nonIntPaths}
\end{figure}
For simplicity, we write
\begin{gather}
  w(\bm{P}) = \prod_{k=0}^{n-1} w(P_k), \qquad
  \tilde{w}(\bm{P}) = \prod_{k=0}^{n-1} \tilde{w}(P_k).
\end{gather}

\begin{thm} \label{thm:detInNIPaths}
  \begin{subequations}
    For general $b_n$ and $c_n$ nonzero, the moment determinant $\Delta^{(s)}_{n}$ admits the expressions
    \begin{align}
      \Delta^{(s)}_{n}
      {} &= (-1)^{\frac{n(n-1)}{2}} \kappa^{n} {\left( \prod_{j=1}^{n-1} b_j^{n-j} \right)}
      \sum_{\bm{P} \in \bm{S}_{s-n,n}} w(\bm{P})
      && \text{if $s \ge n;$}
      \label{eq:GV} \\[2\jot]
      {} &= (-1)^{\frac{n(n-1)}{2}} \tilde{\kappa}^{n} {\left( \prod_{j=1}^{n-1} \tilde{b}_j^{n-j} \right)}
      \sum_{\bm{P} \in \bm{S}_{|s|-n+1,n}} \tilde{w}(\bm{P})
      && \text{if $s \le -n+1.$}
      \label{eq:GVDual}
    \end{align}
  \end{subequations}
\end{thm}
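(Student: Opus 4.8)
The plan is to prove the first identity \eqref{eq:GV} for $s \ge n$ and then deduce the second \eqref{eq:GVDual} by duality. For the reduction, I would use the inverted polynomials $\tilde{P}_n(z)$, whose moments are $\tilde{f}_k = f_{1-k}$ by \eqref{eq:momDual}. A transposition of the Töplitz matrix shows that the moment determinant of $\tilde{\mathcal{F}}$ satisfies $\det(\tilde{f}_{s-j+k})_{j,k} = \det(f_{(1-s)-j+k})_{j,k}$, i.e. $\tilde{\Delta}^{(s)}_n = \Delta^{(1-s)}_n$. Applying the first formula \eqref{eq:GV} to the family $\tilde{P}_n$ (legitimate since $\tilde{b}_n,\tilde{c}_n$ are again nonzero) and substituting $s \mapsto 1-s$, so that the hypothesis $s \ge n$ becomes $1-s \le -n+1$, turns the radius parameter $s-n$ into $(1-s)-n = |1-s|-n+1$ and reproduces \eqref{eq:GVDual} verbatim. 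Thus it suffices to establish \eqref{eq:GV}.

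For \eqref{eq:GV} I would run Gessel--Viennot's lemma with all sources and sinks placed on the $x$-axis. Take sources $A_j = (2j,0)$ and sinks $B_k = (2(s-1)+2k,0)$ for $j,k=0,\ldots,n-1$. Since the weight of a Schröder path depends only on the heights of its steps, horizontal translation is weight-preserving, so by Theorem \ref{thm:momPaths} the full generating function of single Schröder paths from $A_j$ to $B_k$ is $M_{jk} = \sum_{P \in S_{s-j+k-1}} w(P) = f_{s-j+k}/\kappa$; hence $\det(M_{jk}) = \kappa^{-n}\Delta^{(s)}_n$. Because $s \ge n$ forces every source to lie weakly left of every sink, the only permutation admitting a non-intersecting (pairwise disjoint) routing is the reversal $\sigma(j)=n-1-j$: the arcs in the upper half-plane must nest, with $A_0$ joined to the rightmost sink and $A_{n-1}$ to the leftmost. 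All other permutations cancel in pairs by the usual involution on intersecting tuples, and the surviving sign is $\mathrm{sgn}(\sigma) = (-1)^{n(n-1)/2}$. This yields $\kappa^{-n}\Delta^{(s)}_n = (-1)^{n(n-1)/2}\sum_{\{R_j\}} w(\{R_j\})$, the sum being over non-intersecting nested $n$-tuples of $x$-axis Schröder paths in the reversal class.

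The remaining task is to match these nested height-$0$ families with the tuples $\bm{P}\in\bm{S}_{s-n,n}$ appearing in \eqref{eq:GV}. I would exhibit the explicit map that augments each $P_k\colon(-k,k)\to(2(s-n)+k,k)$ by prepending a run of $k$ up steps (from $(-2k,0)$ up to $(-k,k)$, each of weight $1$) and appending a run of $k$ down steps (from $(2(s-n)+k,k)$ down to $(2(s-n)+2k,0)$, descending from heights $k,k-1,\ldots,1$ and so weighing $b_k b_{k-1}\cdots b_1$). After the global shift by $2(n-1)$ this augmented path runs from $A_{n-1-k}$ to $B_k$, so the tuple $\bm{P}$ maps to a reversal-class nested family, and the total weight is multiplied by $\prod_{k=0}^{n-1}\prod_{i=1}^{k} b_i = \prod_{j=1}^{n-1} b_j^{\,n-j}$. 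Feeding $w(\{R_j\}) = \bigl(\prod_{j=1}^{n-1} b_j^{\,n-j}\bigr)\,w(\bm{P})$ into the determinant expression and clearing the factor $\kappa^{-n}$ gives exactly \eqref{eq:GV}.

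The routine parts are the duality reduction and the sign bookkeeping, which are standard once the sources and sinks are placed on the axis. The main obstacle is the last step: I must verify that the augmentation is a genuine weight-preserving bijection onto the reversal-class non-intersecting families, i.e. that non-intersection of $\bm{P}$ is equivalent to non-intersection of the augmented tuple, and in particular that the initial up-runs and terminal down-runs are \emph{forced} by disjointness at the crowded left and right ends (where the $n$ sources, respectively sinks, sit at consecutive even integers). This forced-corner analysis — the exact analogue of the minimal path configuration in the classical Hankel-determinant evaluation — is where the $\prod_{j=1}^{n-1} b_j^{\,n-j}$ originates and is the step demanding the most care.
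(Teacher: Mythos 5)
Your proposal is correct and follows essentially the same route as the paper: express each moment as a weighted sum over Schr\"oder paths via Theorem \ref{thm:momPaths}, apply Gessel--Viennot with sources and sinks on the $x$-axis, and observe that disjointness forces the initial up-runs and terminal down-runs, which is exactly where the factor $\prod_{j=1}^{n-1} b_j^{\,n-j}$ and the reduction to $\bm{S}_{s-n,n}$ come from. The only cosmetic differences are that the paper first reverses the rows to pass to a Hankel determinant (so the sign $(-1)^{n(n-1)/2}$ appears up front and the identity permutation survives, rather than the reversal permutation in your T\"oplitz arrangement), and that you spell out the duality reduction $\tilde{\Delta}^{(s)}_{n}=\Delta^{(1-s)}_{n}$ for \eqref{eq:GVDual} where the paper simply says the dual case is analogous.
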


\begin{proof}
  Suppose that $s \ge n \ge 0$.
  We rewrite $\Delta^{(s)}_{n}$ into Hankel form,
  \begin{gather} \label{eq:gqof8e9}
    \Delta^{(s)}_{n} = (-1)^{\frac{n(n-1)}{2}} \det(f_{s-n+j+k+1})_{j,k=0,\ldots,n-1}.
  \end{gather}
  Owing to Theorem \ref{thm:momPaths},
  the $(j,k)$-entry of the last Hankel determinant has the combinatorial expression
  \begin{gather}
    f_{s-n+j+k+1} = \kappa \sum_{P_{j,k}} w(P_{j,k})
  \end{gather}
  where we can assume that the sum ranges over all Schr\"oder paths $P_{j,k}$ from $(-2j,0)$ to $(2(s-n)+2k,0)$.
  Thus, we can apply Gessel--Viennot's lemma \cite{Gessel-Viennot(1985),Aigner(2001LNCS)}
  to expand the determinant \eqref{eq:gqof8e9},
  \begin{gather} \label{eq:voihvedavj}
    \det(f_{s-n+j+k+1})_{j,k=0,\ldots,n-1} = {}
    \kappa^{n} \sum_{(P_{0,0},\ldots,P_{n-1.n-1})} w(P_{0,0}) \cdots w(P_{n-1,n-1})
  \end{gather}
  where the sum ranges over all $n$-tuples $(P_{0,0},\ldots,P_{n-1,n-1})$
  of {\em non-intersecting} Schr\"oder paths $P_{k,k}$
  such that $P_{k,k}$ goes from $(-2k,0)$ to $(2(s-n)+2k,0)$ for each $k$.
  (See Figure \ref{fig:gcd9dlcwv} for example.)
  \begin{figure}
    \centering
\includegraphics{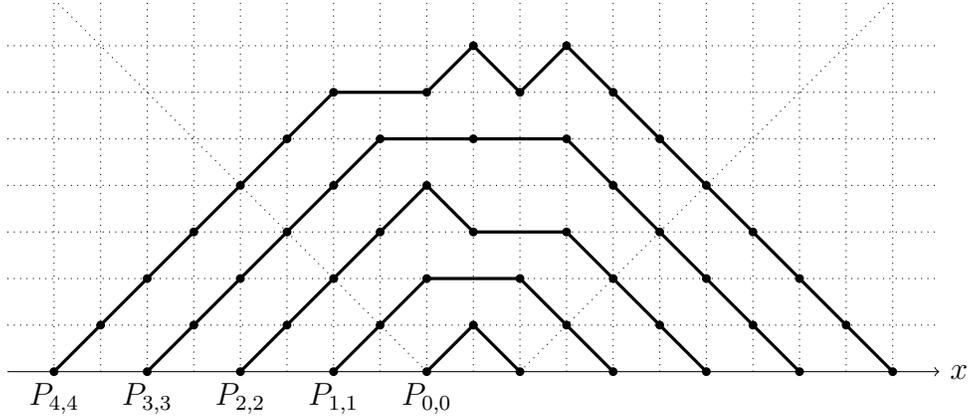}
    \caption{%
      A quintuple $(P_{0,0},\ldots,P_{4,4}) $ of non-intersecting Schr\"oder paths counted
      in the right-hand sum of \eqref{eq:voihvedavj} ($m=1$ and $n=5$).
    }
    \label{fig:gcd9dlcwv}
  \end{figure}
  As shown in Figure \ref{fig:gcd9dlcwv},
  the first and last $k$ steps of $P_{k,k}$ must be all up and down steps, respectively,
  so that the paths do not collide.
  Especially, $P_{k,k}$ passes the points $(-k,k)$ and $(2(s-n)+k,k)$.
  We thus have
  \begin{gather}
    \det(f_{s-n+j+k+1})_{j,k=0,\ldots,n-1} = {}
    \kappa^{n} {\left( \prod_{j=1}^{n-1} b_j^{n-j} \right)} \sum_{\bm{P} \in \bm{S}_{s-n,n}} w(\bm{P})
  \end{gather}
  and thereby \eqref{eq:GV}.
  In the same way, we can show \eqref{eq:GVDual} from Theorem \ref{thm:momPaths}.
\end{proof}

For example, for $m=3$ and $n=2$, the set $\bm{S}_{1,2}$ contains exactly eight doubles $(P_0,P_1)$
of non-intersecting Schr\"oder paths which are shown in Figure \ref{fig:NIPaths}.
\begin{figure}
  \centering
\includegraphics{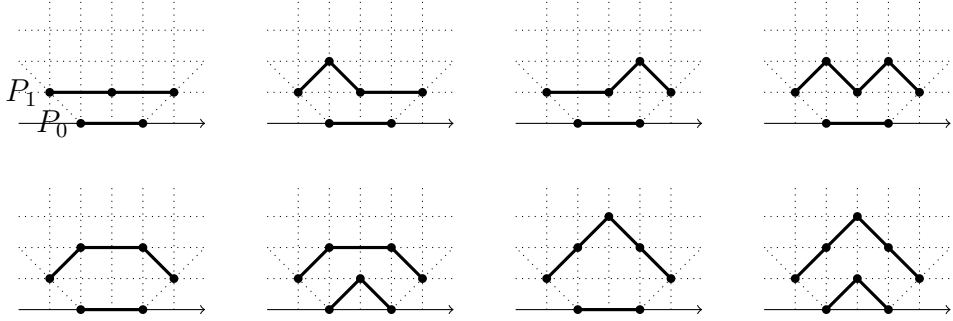}
  \caption{%
    The eight doubles $(P_0,P_1) \in \bm{S}_{1,2}$ of non-intersecting Schr\"oder paths.
  }
  \label{fig:NIPaths}
\end{figure}
Thus, the moment determinant $\Delta^{(3)}_{2}$ equals to the polynomial of eight monomials
\begin{gather}
  \Delta^{(3)}_{2} = {}
  -\kappa^2 b_1 (c_0 c_1^2 + 2 b_2 c_0 c_1 + b_2^2 c_0 + b_2 c_0 c_2 + b_1 b_2 c_2 + b_2 b_3 c_0 + b_1 b_2 b_3)
\end{gather}
of which each monomial corresponds to a diagram in Figure \ref{fig:NIPaths}.

\section{$q$-Narayana polynomials as moments}
\label{sec:qNarPolMom}

In Section \ref{sec:qNarPolMom} and the subsequent,
we consider the special case of the LBPs whose moments are described by the $q$-Narayana polynomials.
Let us recall from Section \ref{sec:introduction} the definition of the $q$-Narayana polynomials
\begin{gather} \label{eq:qNaraPolRp}
  N_{k}(t,q) = \sum_{P \in S_k} t^{\mathrm{level}(P)} q^{\mathrm{area}(P)}
\end{gather}
where $\mathrm{level}(P)$ denotes the number of level steps in a Schr\"oder path $P$,
and $\mathrm{area}(P)$ the area bordered by $P$ and the $x$-axis.
For example, the first few of the $q$-Narayana polynomials are enumerated in
\begin{subequations}
  \begin{align}
    N_0(t,q) &= 1, \\
    N_1(t,q) &= t+q, \\
    N_2(t,q) &= t^2 + 2 t q + t q^3 + q^2 + q^4, \\
    N_3(t,q) &= t^3 + 3 t^2 q + 2 t^2 q^3 + t^2 q^5 + 3 t q^2 + 4 t q^4 + 2 t q^6 + t q^8 \nonumber \\
    & \qquad {} + q^3 + 2 q^5 + q^7 + q^9.
  \end{align}
\end{subequations}

In view of Theorem \ref{thm:momPaths}, it is easy to find the $q$-Narayana polynomials in the moments of LBPs.

\begin{thm} \label{thm:NarayanaPolsInMoms}
  Let us determine the LBPs $P_{n}(z)$ by the recurrence \eqref{eq:recurrence} with the coefficients
  \begin{gather} \label{eq:dsach87we7}
    b_{n} = q^{2n-1}, \qquad
    c_{n} = t q^{2n}.
  \end{gather}
  Then,
  \begin{subequations} \label{eq:NarayanaPolsInMoms}
    the linear functional $\mathcal{F}$ for $P_{n}(z)$ admits
    the moments $f_k = \mathcal{F}[z^k]$ described by the $q$-Narayana polynomials,
    \begin{align}
      f_{k}
      {} &= \kappa N_{k-1}(t,q)              && \text{for $k \ge 1;$}
      \label{eq:momNarPos} \\[1\jot]
      {} &= \kappa t^{-2|k|-1} N_{|k|}(t,q^{-1}) && \text{for $k \le 0,$}
      \label{eq:momNarNeg}
    \end{align}
    where $\kappa$ is an arbitrary nonzero constant.
  \end{subequations}
\end{thm}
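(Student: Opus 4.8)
The plan is to read off the moments directly from the combinatorial expression in Theorem \ref{thm:momPaths}, which expresses each moment as a weighted sum over Schr\"oder paths, and then to match that weighted sum against the definition \eqref{eq:qNaraPolRp} of the $q$-Narayana polynomials. The key observation is that both sides are sums over the \emph{same} index set $S_k$ of Schr\"oder paths from $(0,0)$ to $(2k,0)$, so the whole task reduces to checking that the path weight $w(P)$ induced by the specific coefficients \eqref{eq:dsach87we7} equals the monomial $t^{\mathrm{level}(P)} q^{\mathrm{area}(P)}$ (up to the bookkeeping factor carried by $\kappa$).

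First I would treat the positive moments \eqref{eq:momNarPos}. By \eqref{eq:momPathsPos} we have $f_{k+1} = \kappa \sum_{P \in S_k} w(P)$, so it suffices to prove $w(P) = t^{\mathrm{level}(P)} q^{\mathrm{area}(P)}$ for every $P \in S_k$. Recall that $w(P)$ is the product of the step labels: unity for each up step, $b_n = q^{2n-1}$ for a down step descending from the line $y=n$, and $c_n = t q^{2n}$ for a level step on the line $y=n$. The factor $t$ appears once per level step, giving the exponent $\mathrm{level}(P)$ immediately. For the $q$-exponent I would verify that summing $2n-1$ over the down steps and $2n$ over the level steps reproduces exactly $\mathrm{area}(P)$; this is the routine but essential core of the argument, and I expect it to be the main obstacle. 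The cleanest way is to decompose the area under $P$ into the unit-area contributions of the lattice cells beneath each step: an up step followed later by its matching down step from height $n$ encloses columns whose heights pair up to give the odd count $2n-1$, while a level step at height $n$ sits atop a rectangle of area $2n$. A short induction on the number of steps, or equivalently an accounting of area column-by-column, will confirm that these local contributions sum to $\mathrm{area}(P)$.

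Next I would handle the negative moments \eqref{eq:momNarNeg}. Here the natural route is to use the inverted (dual) LBPs: by \eqref{eq:bc_arginv} the dual coefficients are $\tilde{b}_n = b_n/(c_{n-1}c_n) = q^{2n-1}/(t^2 q^{4n-2}) = t^{-2} q^{-(2n-1)}$ and $\tilde{c}_n = 1/c_n = t^{-1} q^{-2n}$. Comparing with \eqref{eq:dsach87we7}, these are exactly the coefficients obtained from the original family by the substitution $q \mapsto q^{-1}$ together with an overall rescaling by powers of $t$. Applying \eqref{eq:momPathsNeg}, namely $f_{-k} = \tilde{\kappa} \sum_{P \in S_k} \tilde{w}(P)$, and repeating the weight computation for the dual labels, I would obtain $\tilde{w}(P) = t^{-\mathrm{level}(P)-2\,\mathrm{area}(P)} q^{-\mathrm{area}(P)}$, which after recognizing $N_{|k|}(t,q^{-1}) = \sum_{P \in S_{|k|}} t^{\mathrm{level}(P)} q^{-\mathrm{area}(P)}$ collapses to the claimed formula. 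The only delicate points are tracking the power of $t$ carefully so that the exponent $-2|k|-1$ emerges correctly, and confirming the prefactor via \eqref{eq:kappa-kappaTilde}, which relates $\tilde{\kappa}$ to $\kappa$ through $\tilde{\kappa} = \kappa/c_0 = \kappa t^{-1}$; together with the $t$-powers carried by $\tilde{w}(P)$ this should account for the entire factor $t^{-2|k|-1}$.
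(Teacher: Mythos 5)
Your overall strategy is exactly the paper's: specialize Theorem \ref{thm:momPaths} to the coefficients \eqref{eq:dsach87we7}, check that $w(P)=t^{\mathrm{level}(P)}q^{\mathrm{area}(P)}$, and handle the negative moments through the dual weights $\tilde w$ together with $\tilde\kappa=\kappa/c_0=\kappa t^{-1}$. The positive-moment half is fine; the area bookkeeping you describe (each down step leaving height $n$ accounting for $q^{2n-1}$, each level step at height $n$ for $q^{2n}$, these exponents summing to $\mathrm{area}(P)$) is precisely the verification the paper leaves implicit.

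The one concrete error is your stated dual weight $\tilde w(P)=t^{-\mathrm{level}(P)-2\,\mathrm{area}(P)}q^{-\mathrm{area}(P)}$. The $t$-exponent cannot involve $\mathrm{area}(P)$: each level step contributes $t^{-1}$, each down step contributes $t^{-2}$, and for $P\in S_k$ the number of down steps is $k-\mathrm{level}(P)$, so the total $t$-exponent is $-\mathrm{level}(P)-2\bigl(k-\mathrm{level}(P)\bigr)=\mathrm{level}(P)-2k$. The correct formula is therefore $\tilde w(P)=t^{\mathrm{level}(P)-2k}q^{-\mathrm{area}(P)}$, which is what the paper records. With your version the sum $\sum_{P\in S_k}\tilde w(P)$ would not factor as $t^{-2k}\,N_k(t,q^{-1})$, because the exponent $-\mathrm{level}(P)-2\,\mathrm{area}(P)$ varies with $P$ in a way that is not $\mathrm{level}(P)$ plus a constant, so the ``collapse'' you invoke would fail. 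With the corrected exponent everything goes through: $f_{-k}=\tilde\kappa\,t^{-2k}\sum_{P\in S_k}t^{\mathrm{level}(P)}q^{-\mathrm{area}(P)}=\kappa\,t^{-2k-1}N_k(t,q^{-1})$, which is \eqref{eq:momNarNeg}.
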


\begin{proof}
  Let $P \in S_k$.
  Labelled with \eqref{eq:dsach87we7}, $P$ weighs $w(P) = t^{\mathrm{level}(P)} q^{\mathrm{area}(P)}$.
  Hence, by virtue of Theorem \ref{thm:momPaths}, we have \eqref{eq:momNarPos}
  as a special case of \eqref{eq:momPathsPos}.
  Similarly, with
  \begin{gather} \label{eq:NarayanaCfsDual}
    \tilde{b}_{n} = t^{-2} q^{-2n+1}, \qquad
    \tilde{c}_{n} = t^{-1} q^{-2n}
  \end{gather}
  from \eqref{eq:bc_arginv}, $\tilde{w}(P) = t^{-2k+\mathrm{level}(P)} q^{-\mathrm{area}(P)}$.
  Now $\tilde{\kappa} = \kappa t^{-1}$ from \eqref{eq:kappa-kappaTilde}.
  Hence, we obtain \eqref{eq:momNarNeg} from \eqref{eq:momPathsNeg}.
\end{proof}

We remark that the $q$-Narayana polynomials \eqref{eq:qNaraPolRp} defined in a combinatorial way are
already investigated by Cigler \cite{Cigler(2005arXiv)}
who introduced the polynomials by modifying the generating function of the $q$-Catalan numbers.
Indeed, we can deduce from \eqref{eq:qNaraPolRp} a recurrence
\begin{gather} \label{eq:qNarRec}
  N_k(t,q) = t N_{k-1}(t,q) + \sum_{j=0}^{k-1} q^{2j+1} N_j(t,q) N_{k-j-1}(t,q).
\end{gather}
We can identify \eqref{eq:qNarRec} with the recurrence in \cite[Eq.~(19)]{Cigler(2005arXiv)}.

\section{Determinant of $q$-Narayana polynomials}
\label{sec:ToeplitzDets}

In Section \ref{sec:ToeplitzDets}, we examine a T\"oplitz determinant of the $q$-Narayana polynomials
\begin{gather} \label{eq:DetNarayanaPol}
  \mathcal{N}^{(s)}_{n}(t,q) = \det(N_{s+j-k-1}(t,q))_{j,k=0,\ldots,n-1},
\end{gather}
where, in view of \eqref{eq:NarayanaPolsInMoms}, we define $N_k(t,q)$ for negative $k$ by
\begin{gather}
  N_{k}(t,q) = t^{-2|k|-1} N_{|k|-1}(t,q^{-1}) \qquad \text{for $k < 0$.}
\end{gather}
As the special case of the moments given by the $q$-Narayana polynomials,
Theorem \ref{thm:detInNIPaths} allows us to read the determinant \eqref{eq:DetNarayanaPol}
in the context of non-intersecting Schr\"oder paths.
The results in this section, Theorem \ref{thm:DetNarayanaPols} and Corollary \ref{cor:NIPathsNarayana},
will be applied later in Section \ref{sec:ADT} to a new proof of the Aztec diamond theorem (Theorem \ref{thm:ADT}).

Theorem \ref{thm:NarayanaPolsInMoms} implies that
\begin{gather} \label{eq:hiusgf7e}
  \Delta^{(s)}_{n} = \mathcal{N}^{(s)}_{n}(t,q)
\end{gather}
provided that the coefficients $b_n$ and $c_n$ of the recurrence \eqref{eq:recurrence} are given
by \eqref{eq:dsach87we7} where $\kappa = 1$.
Hence, we can use the formulae \eqref{eq:detsInCfs}
to find the exact value of $\mathcal{N}^{(s)}_{n}$ for $s \in {\{ 0, 1 \}}$.
Recall that, in using \eqref{eq:detsInCfs00},
we assume $\tilde{b}_{n}$ and $\tilde{c}_{n}$ to be given by \eqref{eq:NarayanaCfsDual}
and $\tilde{\kappa} = t^{-1}$.

\begin{lem} \label{lem:NarayanaDets}
  For $s \in {\{ 0, 1 \}}$ and $n \in \mathbb{N}$,
  the exact value of the determinant $\mathcal{N}^{(s)}_{n}$ is given by
  \begin{subequations} \label{eq:NarayanaDets}
    \begin{align}
      \mathcal{N}^{(1)}_{n}(t,q) &= (-1)^{\frac{n(n-1)}{2}} t^{-\frac{n(n-1)}{2}} q^{\frac{n(n-1)}{2}},
      \label{eq:NarayanaDets_one} \\[1\jot]
      \mathcal{N}^{(0)}_{n}(t,q) &= (-1)^{\frac{n(n-1)}{2}} t^{-\frac{n(n+1)}{2}} q^{-\frac{n(n-1)}{2}}.
      \label{eq:NarayanaDets_zero}
    \end{align}
  \end{subequations}
\end{lem}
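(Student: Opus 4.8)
The plan is to exploit the identity \eqref{eq:hiusgf7e}, which identifies the T\"oplitz determinant $\mathcal{N}^{(s)}_{n}(t,q)$ with the moment determinant $\Delta^{(s)}_{n}$ under the specialization \eqref{eq:dsach87we7} with $\kappa = 1$. Since the target expressions in \eqref{eq:NarayanaDets} are explicit closed forms, it suffices to evaluate $\Delta^{(s)}_{n}$ directly through the factorized formulae \eqref{eq:detsInCfs}, which write each moment determinant as a sign times $\kappa^{n}$ (resp. $\tilde{\kappa}^{n}$) times a product of powers of the recurrence ratios $b_{k}/c_{k-1}$ (resp. $\tilde{b}_{k}/\tilde{c}_{k-1}$). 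The entire argument is thus a substitution followed by bookkeeping of exponents.

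First I would treat $s=1$. Substituting $b_{k} = q^{2k-1}$ and $c_{k-1} = t q^{2k-2}$ into \eqref{eq:detsInCfs01}, I record the key observation that the ratio $b_{k}/c_{k-1} = t^{-1} q$ is \emph{independent of} $k$. Consequently the product $\prod_{k=1}^{n-1} (b_{k}/c_{k-1})^{n-k}$ collapses to $(t^{-1}q)^{E_{n}}$, where $E_{n} = \sum_{k=1}^{n-1}(n-k) = \tfrac{n(n-1)}{2}$. Taking $\kappa = 1$ then yields \eqref{eq:NarayanaDets_one} at once.

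The case $s=0$ proceeds analogously via \eqref{eq:detsInCfs00}, now using the dual coefficients \eqref{eq:NarayanaCfsDual} together with $\tilde{\kappa} = t^{-1}$. The same mechanism applies: the ratio $\tilde{b}_{k}/\tilde{c}_{k-1} = t^{-1} q^{-1}$ is again constant in $k$, so the product becomes $(t^{-1}q^{-1})^{E_{n}}$. Combining this with the prefactor $\tilde{\kappa}^{n} = t^{-n}$ and merging the powers of $t$ through $-n - \tfrac{n(n-1)}{2} = -\tfrac{n(n+1)}{2}$ produces \eqref{eq:NarayanaDets_zero}.

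I expect no genuine obstacle here once the factorized formulae \eqref{eq:detsInCfs} are granted; the proof is pure substitution. The only point deserving mild care is the $t$-exponent arithmetic in the $s=0$ case, where the extra factor $\tilde{\kappa}^{n} = t^{-n}$ must be folded correctly into the collapsed product so that the triangular number $\tfrac{n(n+1)}{2}$ emerges, in contrast to the $s=1$ case where $\kappa^{n}=1$ leaves the symmetric exponent $\tfrac{n(n-1)}{2}$ untouched.
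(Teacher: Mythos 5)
Your proposal is correct and follows exactly the route the paper intends: the paper itself states Lemma \ref{lem:NarayanaDets} immediately after observing \eqref{eq:hiusgf7e} and pointing to the factorized formulae \eqref{eq:detsInCfs} with the specializations \eqref{eq:dsach87we7}, \eqref{eq:NarayanaCfsDual} and $\tilde{\kappa}=t^{-1}$, which is precisely your substitution argument. Your exponent bookkeeping, including the constant ratios $b_k/c_{k-1}=t^{-1}q$ and $\tilde{b}_k/\tilde{c}_{k-1}=t^{-1}q^{-1}$ and the merge $-n-\tfrac{n(n-1)}{2}=-\tfrac{n(n+1)}{2}$, checks out.
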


In order to find the value of $\mathcal{N}^{(s)}_{n}(t,q)$ for further $s \in \mathbb{Z}$ and $n \in \mathbb{N}$,
we can use Sylvester's determinant identity:
\begin{gather} \label{eq:SylvesterDetId}
  X \cdot X(i,j;k,\ell) - X(i;k) \cdot X(j;\ell) + X(i;\ell) \cdot X(j;k) = 0
\end{gather}
where $X$ is an arbitrary determinant and
$X(i,j;k,\ell)$ denotes the minor of $X$
obtained by deleting the $i$-th and the $j$-th rows and the $k$-th and the $\ell$-th columns;
$X(i;k)$ the minor of $X$ with respect to the $i$-th row and the $k$-th column.
Applying Sylvester's determinant identity, we get
\begin{gather} \label{eq:NarayanaSylvester}
  \mathcal{N}^{(s)}_{n+1} \cdot \mathcal{N}^{(s)}_{n-1} - {}
  \mathcal{N}^{(s)}_{n} \cdot \mathcal{N}^{(s)}_{n} + {}
  \mathcal{N}^{(s+1)}_{n} \cdot \mathcal{N}^{(s-1)}_{n} = 0
\end{gather}
for $s \in \mathbb{Z}$ and $n \in \mathbb{N}$,
where $\mathcal{N}^{(s)}_{n} = \mathcal{N}^{(s)}_{n}(t,q)$ except that $\mathcal{N}^{(s)}_{-1} = 0$.
Using \eqref{eq:NarayanaSylvester} as a recurrence from appropriate initial value,
we can compute the value of $\mathcal{N}^{(s)}_{n}(t,q)$ for each $s \in \mathbb{Z}$ and $n \in \mathbb{N}$.
Especially, we find a closed form of $\mathcal{N}^{(s)}_{n}(t,q)$ for $-n \le s \le n+1$ as follows.

\begin{thm} \label{thm:DetNarayanaPols}
  For $-n \le s \le n+1$,
  the exact value of the determinant $\mathcal{N}^{(s)}_{n}(t,q)$ is given by
  \begin{subequations} \label{eq:NarayanaDetsCor}
    \begin{align}
      \mathcal{N}^{(s)}_{n}(t,q)
      {} &= \varphi^{(s)}_{n}(t,q) \prod_{k=1}^{s-1} (t+q^{2k-1})^{s-k} &&
      \text{for $1 \le s \le n+1;$}
      \label{eq:DetNarayanaPols+} \\
      {} &= \varphi^{(s)}_{n}(t,q) \prod_{k=1}^{|s|} (t+q^{-2k+1})^{|s|-k+1} &&
      \text{for $-n \le s \le 0$}
    \end{align}
    where
    \begin{gather}
      \varphi^{(s)}_{n}(t,q) = (-1)^{\frac{n(n-1)}{2}} t^{-\frac{(n-s)(n-s+1)}{2}} q^{\frac{n(n-1)(2s-1)}{2}}.
    \end{gather}
  \end{subequations}
\end{thm}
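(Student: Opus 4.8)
The plan is to treat the closed form \eqref{eq:NarayanaDetsCor} as an ansatz and verify it by induction on $s$, using Sylvester's identity \eqref{eq:NarayanaSylvester} as the recurrence engine and Lemma~\ref{lem:NarayanaDets} to supply the two base columns $s=0$ and $s=1$. Observe first that setting $s=1$ (resp.\ $s=0$) in \eqref{eq:NarayanaDetsCor} makes the product over $k$ empty, so that $\mathcal{N}^{(1)}_n = \varphi^{(1)}_n$ and $\mathcal{N}^{(0)}_n = \varphi^{(0)}_n$, which are precisely \eqref{eq:NarayanaDets_one} and \eqref{eq:NarayanaDets_zero}. From these two columns I would run the induction in two directions: upward, solving \eqref{eq:NarayanaSylvester} for $\mathcal{N}^{(s+1)}_n$ in terms of $\mathcal{N}^{(s)}$ and $\mathcal{N}^{(s-1)}$ for $s=1,2,\ldots,n$; and downward, solving it for $\mathcal{N}^{(s-1)}_n$ in terms of $\mathcal{N}^{(s)}$ and $\mathcal{N}^{(s+1)}$ for $s=0,-1,\ldots,-n+1$. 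Since the conjectured values are nonzero elements of $\mathbb{Q}(t,q)$, the division by the ``known'' factor in each step is legitimate.

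The key simplification is to split each $\mathcal{N}^{(s)}_n$ into its monomial part $\varphi^{(s)}_n$, which carries all dependence on $n$, and its product part $\Pi_s$ (the product over $k$), which depends only on $s$. Substituting into \eqref{eq:NarayanaSylvester} and cancelling the common factor $\Pi_s^2$, the recurrence separates into a \emph{product identity}
\[
  \Pi_{s+1}\,\Pi_{s-1} = \Pi_s^2\,(t+q^{2s-1})
\]
and a \emph{monomial identity}
\[
  \varphi^{(s+1)}_n\,\varphi^{(s-1)}_n\,(t+q^{2s-1}) = \bigl(\varphi^{(s)}_n\bigr)^2 - \varphi^{(s)}_{n+1}\,\varphi^{(s)}_{n-1}.
\]
Both produce the same new factor $t+q^{2s-1}$, which is exactly what the induction must append to $\Pi_s$ when passing from $s$ to $s\pm1$.

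The product identity I would verify by telescoping the exponents: for $s\ge1$ the exponent of $(t+q^{2k-1})$ in $\Pi_{s+1}\Pi_{s-1}$ equals that in $\Pi_s^2$ for every $k\le s-1$, and the single surplus comes from the top index $k=s$, leaving the factor $t+q^{2s-1}$; the same bookkeeping works on the negative side with the factors $(t+q^{-2k+1})$, where for $m=|s|$ one has $-2m-1=2s-1$, so the emerging factor is again $t+q^{2s-1}$. The monomial identity I would verify by writing $M=\varphi^{(s+1)}_n\varphi^{(s-1)}_n$ and checking that $Mt$ matches $(\varphi^{(s)}_n)^2$ while $Mq^{2s-1}$ matches $-\varphi^{(s)}_{n+1}\varphi^{(s)}_{n-1}$, comparing the powers of $t$, the powers of $q$ and the signs term by term (using $(-1)^{n(n-1)}=1$ throughout); the point that makes the split work is that the $q$-powers of the two monomials on the right differ by exactly $2s-1$.

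The step requiring most care, and the place I expect the main obstacle, is the bookkeeping at the seam $s\in\{0,1\}$ together with the range constraint $-n\le s\le n+1$. I must check that every determinant invoked by \eqref{eq:NarayanaSylvester} at a given induction step lies inside the admissible range, so that the inductive hypothesis applies, and in particular that the induction never calls on an out-of-range value such as $\mathcal{N}^{(-1)}_0$; a short inspection of the index inequalities at the smallest admissible $n$ confirms this. Because the monomial identity holds uniformly in $s$ and the two product forms ($q^{2k-1}$ for $s\ge1$, $q^{-2k+1}$ for $s\le0$) meet correctly at the empty products $\Pi_0=\Pi_1=1$, the two induction directions glue together without discrepancy, which completes the verification of \eqref{eq:NarayanaDetsCor}.
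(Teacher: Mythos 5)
Your proposal is correct and follows exactly the paper's route: the paper proves Theorem \ref{thm:DetNarayanaPols} by induction on $s$ via Sylvester's identity \eqref{eq:NarayanaSylvester} starting from the initial values in Lemma \ref{lem:NarayanaDets}, which is precisely your argument. Your separation into the product identity $\Pi_{s+1}\Pi_{s-1}=\Pi_s^2(t+q^{2s-1})$ and the monomial identity for $\varphi^{(s)}_n$ (both of which check out) simply makes explicit the verification the paper leaves to the reader.
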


\begin{proof}
  Using Sylvester's identity \eqref{eq:NarayanaSylvester} from the initial value \eqref{eq:NarayanaDets},
  we can easily show \eqref{eq:NarayanaDetsCor} by induction.
\end{proof}

Note that Cigler \cite{Cigler(2005arXiv)} found a closed-form expression
of the Hankel determinant $\det(N_{s+j+k}(t,q))_{j,k=0,\ldots,n-1}$ of the $q$-Narayana polynomials
for $s \in {\{ 0, 1 \}}$ and $n \in \mathbb{N}$ by means of orthogonal polynomials.
(The Hankel determinant coincides with $\mathcal{N}^{(s)}_{n}(t,q)$ for $s \in {\{ n, n+1 \}}$ without sign.)
Theorem \ref{thm:DetNarayanaPols} generalizes Cigler's result \cite[Eqs.~(24) and (25)]{Cigler(2005arXiv)}
for further $s$ and $n$.

As a corollary of Theorem \ref{thm:DetNarayanaPols},
equating \eqref{eq:DetNarayanaPols+} with \eqref{eq:GV} in Theorem \ref{thm:detInNIPaths},
we obtain the following result about non-intersecting Schr\"oder paths.

\begin{cor} \label{cor:NIPathsNarayana}
  For $m \in {\{ 0,1 \}}$ and $n \in \mathbb{N}$,
  \begin{gather}
    \sum_{\bm{P} \in \bm{S}_{m,n}} t^{\mathrm{level}(\bm{P})} q^{\mathrm{area}(\bm{P})} = {}
    q^{\frac{n(n-1)(3m+2n-1)}{3}} \prod_{k=1}^{m+n-1} (t + q^{2k-1})^{m+n-k}
  \end{gather}
  where $\mathrm{level}(\bm{P}) = \sum_{k=0}^{n-1} \mathrm{level}(P_k)$
  and $\mathrm{area}(\bm{P}) = \sum_{k=0}^{n-1} \mathrm{area}(P_k)$ with $\bm{P} = (P_0,\ldots,P_{n-1})$.
\end{cor}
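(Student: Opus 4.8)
The plan is to read off both sides of the claimed identity from results already established for the $q$-Narayana specialization, and then reconcile them by a short exponent computation. Throughout I take $\kappa = 1$ and the recurrence coefficients $b_n = q^{2n-1}$, $c_n = t q^{2n}$ as in \eqref{eq:dsach87we7}; by Theorem \ref{thm:NarayanaPolsInMoms} and \eqref{eq:hiusgf7e} the moment determinant then becomes the $q$-Narayana determinant, $\Delta^{(s)}_n = \mathcal{N}^{(s)}_n(t,q)$. Under this labelling each Schr\"oder path weighs $w(P) = t^{\mathrm{level}(P)} q^{\mathrm{area}(P)}$, so that $w(\bm{P}) = t^{\mathrm{level}(\bm{P})} q^{\mathrm{area}(\bm{P})}$ and the left-hand side of the corollary is exactly $\sum_{\bm{P} \in \bm{S}_{m,n}} w(\bm{P})$.

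First I would put $s = m+n$ in the path expansion \eqref{eq:GV}, which is valid because $s \ge n$ is equivalent to $m \ge 0$ and so holds for $m \in \{0,1\}$. Solving for the sum gives
\begin{gather*}
  \sum_{\bm{P} \in \bm{S}_{m,n}} w(\bm{P}) = {}
  \frac{(-1)^{\frac{n(n-1)}{2}} \, \mathcal{N}^{(m+n)}_{n}(t,q)}{\prod_{j=1}^{n-1} b_j^{\,n-j}},
\end{gather*}
so it remains to evaluate the numerator determinant and the denominator product explicitly (the case $n = 0$ being trivial, as both sides equal $1$).

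Next I would insert the closed form \eqref{eq:DetNarayanaPols+} with $s = m+n$, which is legitimate since then $s \in \{n,n+1\}$ lies in the admissible range $1 \le s \le n+1$. The crucial point is that the power of $t$ in the prefactor $\varphi^{(s)}_n$ collapses: with $n - s = -m$ the exponent $\tfrac{(n-s)(n-s+1)}{2} = \tfrac{m(m-1)}{2}$ vanishes for $m \in \{0,1\}$. Hence $\varphi^{(m+n)}_n = (-1)^{n(n-1)/2} q^{n(n-1)(2m+2n-1)/2}$, and the product $\prod_{k=1}^{m+n-1}(t+q^{2k-1})^{m+n-k}$ already agrees with the one on the right-hand side of the corollary. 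For the denominator I would use $\prod_{j=1}^{n-1} b_j^{\,n-j} = q^{\sum_{j=1}^{n-1}(2j-1)(n-j)} = q^{n(n-1)(2n-1)/6}$, evaluating the sum by the standard formulae for $\sum j$ and $\sum j^2$.

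Finally I would substitute and simplify. The two factors $(-1)^{n(n-1)/2}$ cancel because $n(n-1)$ is even, and the binomial product survives untouched, so the whole matter reduces to checking the single $q$-exponent
\begin{gather*}
  \frac{n(n-1)(2m+2n-1)}{2} - \frac{n(n-1)(2n-1)}{6} = \frac{n(n-1)(3m+2n-1)}{3},
\end{gather*}
which is immediate after clearing denominators. This last exponent bookkeeping is the only place any computation is required; there is no structural obstacle, and the restriction $m \in \{0,1\}$ is precisely what makes the $t$-power in $\varphi^{(s)}_n$ disappear so that the product of binomials on the right emerges cleanly.
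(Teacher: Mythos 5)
Your proposal is correct and follows exactly the paper's route: the corollary is obtained by equating the closed form \eqref{eq:DetNarayanaPols+} at $s=m+n$ with the path expansion \eqref{eq:GV}, and your exponent bookkeeping (including $\prod_{j=1}^{n-1}b_j^{\,n-j}=q^{n(n-1)(2n-1)/6}$ and the vanishing of the $t$-power in $\varphi^{(m+n)}_n$ for $m\in\{0,1\}$) checks out. The paper states this derivation in one sentence without the computation, so your write-up simply supplies the details it omits.
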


\section{Proof of Aztec diamond theorem}
\label{sec:ADT}

Finally, in Section \ref{sec:ADT}, we give a new proof of the Aztec diamond theorem (Theorem \ref{thm:ADT})
based on the discussions in the foregoing sections.
In the two-parted paper by Elkies, Kuperberg, Larsen and Propp
\cite{Elkies-Kuperberg-Larsen-Propp(1992.01),Elkies-Kuperberg-Larsen-Propp(1992.02)},
the Aztec diamond theorem is proven by the technique of the domino shuffling.
Whereas, the proof in this paper is based on the one-to-one correspondence
between tilings of the Aztec diamonds and tuples of non-intersecting Schr\"oder paths
developed by Eu and Fu \cite{Eu-Fu(2005)} who used the correspondence to prove \eqref{eq:ADT11}.

In order to make the statement precise, as we announced in Section \ref{sec:introduction},
we review from \cite{Elkies-Kuperberg-Larsen-Propp(1992.01)} the definition of the rank statistic.
Let $T \in T_n$ be a tiling of the Aztec diamond $\mathit{AD}_n$.
If $n \ge 1$, $T$ certainly contains one or more two-by-two blocks of two horizontal or vertical dominoes.
Thus, choosing one from such two-by-two blocks and rotating it by ninety degrees,
we obtain a new tiling $T' \in T_n$.
(See Figure \ref{fig:elmMove}.)
\begin{figure}
  \centering
\includegraphics{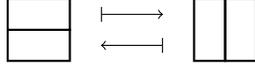}
  \caption{Rotation of a two-by-two block of two horizontal or vertical dominoes in an elementary move.}
  \label{fig:elmMove}
\end{figure}
We refer by an {\em elementary move} to this operation of transforming $T$ into $T'$ by rotating a two-by-two block.
It can be shown that any tiling of $\mathit{AD}_n$ can be reached from any other tiling of $\mathit{AD}_n$
by a sequence of elementary moves.
The rank $r(T)$ of $T$ denotes the minimal number of elementary moves
required to reach $T$ from the ``all-horizontal'' tiling $T^0$ consisting only of horizontal dominoes,
where $r(T^0) = 0$.
For example, in Figure \ref{fig:rank}, the rightmost tiling $T$ of $\mathit{AD}_{2}$ has the rank $r(T) = 4$
since at least four elementary moves are required to reach from the leftmost $T^0$.
\begin{figure}
  \centering
\includegraphics{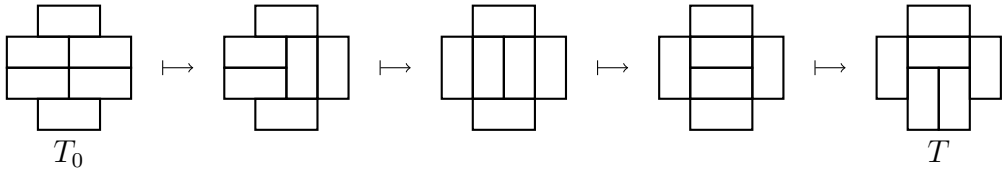}
  \caption{%
    A sequence of elementary moves from $T^0$ to $T$ of $\mathit{AD}_{2}$.
    At least four elementary moves are required to reach from $T^0$ only of horizontal dominoes
    to the rightmost $T$, and thereby $r(T) = 4$.
  }
  \label{fig:rank}
\end{figure}

Eu and Fu \cite{Eu-Fu(2005)} developed a one-to-one correspondence between $T_n$ and $\bm{S}_{1,n}$.
We describe the bijection from $T_n$ to $\bm{S}_{1,n}$ in a slightly different manner from \cite{Eu-Fu(2005)}.
Following \cite{Elkies-Kuperberg-Larsen-Propp(1992.01)},
we color the Aztec diamond $\mathit{AD}_n$ in a black-white checkerboard fashion
so that all unit squares on the upper-left border of $\mathit{AD}_n$ are white.
We say that a horizontal domino (resp.~a vertical domino) put into $\mathit{AD}_n$ is {\em even}
if the left half (resp.~the upper half) of the domino covers a white unit square.
Otherwise, the domino is {\em odd}.
The bijection mapping a tiling $T \in T_n$
to an $n$-tuple $\bm{P} = (P_0,\ldots,P_{n-1}) \in \bm{S}_{1,n}$ of non-intersecting Schr\"oder paths is described
by the following procedure:
For each domino in $T$, as shown in Figure \ref{fig:step-domino},
draw an up step (resp.~ a down step, a level step) that goes through the center of the domino
if the domino is even vertical (resp.~odd vertical, odd horizontal).
(For even horizontal dominoes, we do nothing.)
\begin{figure}
  \centering
\includegraphics{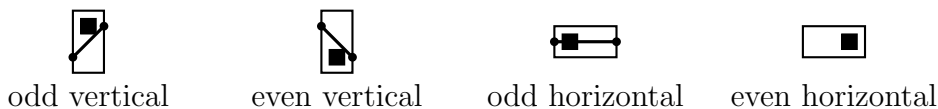}
  \caption{The rule to draw a step on a domino.}
  \label{fig:step-domino}
\end{figure}
Then, we find $n$ non-intersecting Schr\"oder paths $P_0,\ldots,P_{n-1}$ on $T$
of which the $n$-tuple $\bm{P} = (P_0,\ldots,P_{n-1})$ belongs to $\bm{S}_{1,n}$.
For example, see Figure \ref{fig:bijection}.
\begin{figure}
  \centering
\includegraphics{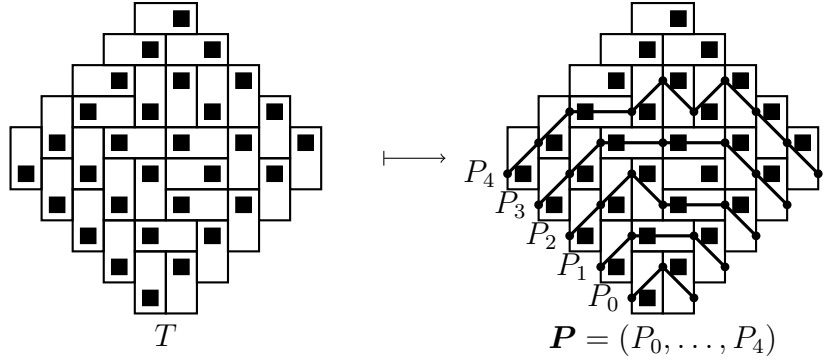}
  \caption{%
    The bijection mapping a tiling $T \in T_5$ of $\mathit{AD}_5$
    to a quintuple $\bm{P} = (P_0,\ldots,P_4) \in S_{1,5}$ of non-intersecting Schr\"oder paths.
    (The Aztec diamond is colored in a checkerboard fashion.)
  }
  \label{fig:bijection}
\end{figure}

The bijection connects the statistics $v(T)$ and $r(T)$ for tilings
and the statistics $\mathrm{level}(P)$ and $\mathrm{area}(P)$ for Schr\"oder paths as follows.
Recall from Section \ref{sec:introduction}
that $v(T)$ denotes half the number of vertical dominoes in a tiling $T$.

\begin{lem} \label{lem:1-1}
  Suppose that a tiling $T \in T_n$
  and an $n$-tuple $\bm{P} = (P_0,\ldots,P_{n-1}) \in \bm{S}(1,n)$ of non-intersecting Schr\"oder paths are
  in the one-to-one correspondence by the bijection.
  Then,
  \begin{align}
    v(T) &= \frac{n(n+1)}{2} - \mathrm{level}(\bm{P}),
    \label{eq:1-1_vert-level} \\
    r(T) &= \mathrm{area}(\bm{P}) - \frac{2n(n+1)(n-1)}{3},
    \label{eq:1-1_rank-area}
  \end{align}
  where $\mathrm{level}(\bm{P}) = \sum_{k=0}^{n-1} \mathrm{level}(P_k)$
  and $\mathrm{area}(\bm{P}) = \sum_{k=0}^{n-1} \mathrm{area}(P_k)$.
\end{lem}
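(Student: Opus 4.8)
The plan is to prove the two identities separately: \eqref{eq:1-1_vert-level} by a direct step count, and \eqref{eq:1-1_rank-area} by tracking the effect of a single elementary move on the area.

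For \eqref{eq:1-1_vert-level}, I would first read off from the drawing rule (Figure \ref{fig:step-domino}) that every vertical domino produces exactly one up step (if even) or one down step (if odd), that the level steps arise solely from the odd horizontal dominoes, and that even horizontal dominoes produce nothing. Hence the total number of vertical dominoes equals the total number of up and down steps of $\bm{P}$, and $\mathrm{level}(\bm{P})$ equals the number of odd horizontal dominoes. Since each $P_k \in \bm{S}_{1,n}$ runs from $(-k,k)$ to $(2+k,k)$ with zero net height change, it has equally many up and down steps, say $u_k$ each, and its horizontal span $2+2k$ forces $u_k + \ell_k = 1+k$, where $\ell_k = \mathrm{level}(P_k)$. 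Therefore $v(T) = \sum_{k=0}^{n-1} u_k$, and summing $u_k + \ell_k = 1+k$ over $k$ gives $v(T) + \mathrm{level}(\bm{P}) = \sum_{k=0}^{n-1}(1+k) = \frac{n(n+1)}{2}$, which is \eqref{eq:1-1_vert-level}.

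For \eqref{eq:1-1_rank-area}, the strategy is to exhibit the area statistic as an affine image of the rank. First I would identify the image of the all-horizontal tiling $T^0$: since all its dominoes are horizontal, the drawing rule yields only level steps, so each $P_k$ is the flat path at height $k$, and the resulting tuple $\bm{P}^0$ has $\mathrm{area}(P_k^0) = k(2+2k)$. Summing gives $\mathrm{area}(\bm{P}^0) = \sum_{k=0}^{n-1}(2k+2k^2) = \frac{2n(n+1)(n-1)}{3}$, which matches the constant in \eqref{eq:1-1_rank-area} and is consistent with $r(T^0)=0$. It then suffices to prove that a single elementary move changes $\mathrm{area}(\bm{P})$ by exactly $\pm 1$, with the sign equal to the sign of the corresponding change of rank. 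I would verify this by a local analysis of the rotated $2\times 2$ block (Figure \ref{fig:elmMove}), splitting into the two checkerboard cases according to the color of the lower-left cell. In one case the block carries, in its horizontal state, a level step that becomes a peak (an up step followed by a down step) after rotation, raising the enclosed area by one; in the other case the same level step becomes a valley (a down step followed by an up step), lowering it by one. Granting that the coloring parity deciding peak-versus-valley is the same parity deciding whether the move raises or lowers the rank, the identity $r(T) = \mathrm{area}(\bm{P}) - \mathrm{area}(\bm{P}^0)$ follows by induction on $r(T)$, using the connectivity of $T_n$ under elementary moves and the base case $T = T^0$.

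The main obstacle I anticipate is precisely this sign-matching: showing that a rank-increasing elementary move always increases the area and never decreases it. The area change is manifestly $\pm 1$ from the local picture, but aligning its sign with the rank's orientation in both coloring cases requires relating the peak/valley dichotomy to the increment of the EKLP height function. I would handle this by checking that $\mathrm{area}(\bm{P}) - \mathrm{area}(\bm{P}^0)$ is nonnegative and strictly monotone along the lattice of tilings ordered by the height function, so that it agrees with the rank grade by grade rather than merely differing from it by a quantity of absolute value one per move.
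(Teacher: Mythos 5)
Your proof follows the paper's argument essentially verbatim: the first identity comes from the constant total $u_k+\ell_k=1+k$ per path summed over $k$, and the second from computing $\mathrm{area}(\bm{P}^0)=\frac{2n(n+1)(n-1)}{3}$ and showing each elementary move shifts the rank and the area by the same $\pm 1$. The sign-matching you flag as the main remaining obstacle is exactly the point the paper also settles only by inspecting the two colored local configurations (its Figure \ref{fig:emoves-pathDefms}), so your peak-versus-valley case analysis of the rotated block is the intended resolution and no detour through monotonicity on the lattice of tilings is needed.
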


\begin{proof}
  The bijection implies that $v(T)$ equals to half the number of up and down steps in $\bm{P}$.
  The sum of half the number of up and down steps and the number of level steps in $\bm{P}$ is a constant
  independent of $\bm{P}$ that equals to $n(n+1)/2$.
  Thus, we have \eqref{eq:1-1_vert-level}.

  As shown in Figure \ref{fig:emoves-pathDefms},
  each elementary move of a tiling $T$ raising the rank by one gives rise
  to a deformation of some path in $\bm{P}$ increasing the area by one.
  \begin{figure}
    \centering
\includegraphics{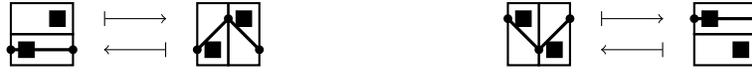}
    \caption{%
      A rotation of a two-by-two block in an elementary move raises the rank of the tiling $T$ by one
      (left-to-right, respectively)
      if and only if the corresponding deformation of a tuple $\bm{P}$
      of non-intersecting Schr\"oder paths increases $\mathrm{area}(\bm{P})$ by one.
    }
    \label{fig:emoves-pathDefms}
  \end{figure}
  Thus, $r(T)$ and $\mathrm{area}(\bm{P})$ differ by a constant independent of $T$ and $\bm{P}$.
  Since $r(T^0) = 0$ then the constant equals to $\mathrm{area}(\bm{P}^0) = 2n(n+1)(n-1)/3$,
  where $T^0$ denotes the ``all-horizontal'' tiling  of $\mathit{AD}_n$
  and $\bm{P}^0 \in \bm{S}_{1,n}$ the $n$-tuple of non-intersecting Schr\"oder paths only of level steps
  that corresponds to $T^0$.
  Thus, we have \eqref{eq:1-1_rank-area}.
\end{proof}

Now, we give a proof of the Aztec diamond theorem.

\begin{proof}[Proof of Theorem \ref{thm:ADT}]
  As a consequence of Lemma \ref{lem:1-1},
  we can substitute \eqref{eq:1-1_vert-level} and \eqref{eq:1-1_rank-area} into \eqref{eq:ADT} to obtain
  \begin{gather} \label{eq:lnciubewg9}
    \mathrm{AD}_n(t,q) = {}
    t^{\frac{n(n+1)}{2}} q^{-\frac{2n(n-1)(n+1)}{3}}
    \sum_{\bm{P} \in S_{1,n}} t^{-\mathrm{level}(\bm{P})} q^{\mathrm{area}(\bm{P})}.
  \end{gather}
  From Corollary \ref{cor:NIPathsNarayana},
  the sum in the right-hand side of \eqref{eq:lnciubewg9} is equated with
  \begin{gather} \label{eq:o0qkpvsw}
    \sum_{\bm{P} \in S_{1,n}} t^{-\mathrm{level}(\bm{P})} q^{\mathrm{area}(\bm{P})} = {}
    q^{\frac{2n(n-1)(n+1)}{3}} \prod_{k=1}^{n} (t^{-1} + q^{2k-1})^{n-k+1}.
  \end{gather}
  Substituting \eqref{eq:o0qkpvsw} into the right-hand side of \eqref{eq:lnciubewg9}, we have
  \begin{gather}
    \mathrm{AD}_n(t,q) = {}
    t^{\frac{n(n+1)}{2}} \prod_{k=1}^{n} (t^{-1} + q^{2k-1})^{n-k+1} = {}
    \prod_{k=1}^{n} (1 + t q^{2k-1})^{n-k+1}.
  \end{gather}
  That completes the proof of Theorem \ref{thm:ADT}.
\end{proof}

\section{Concluding remarks}
\label{sec:conclusions}

In this paper, we evaluated a determinant whose entries are given by the $q$-Narayana polynomials
(Theorem \ref{thm:DetNarayanaPols}).
In order to find the value of the determinant, we utilized Laurent biorthogonal polynomials
which allow of a combinatorial interpretation in terms of Schr\"oder paths
(Theorem \ref{thm:momPaths} and Theorem \ref{thm:NarayanaPolsInMoms}).
As an application, we exhibited a new proof of the Aztec diamond theorem (Theorem \ref{thm:ADT})
by Elkies, Kuperberg, Larsen and Propp
\cite{Elkies-Kuperberg-Larsen-Propp(1992.01),Elkies-Kuperberg-Larsen-Propp(1992.02)}
with the help of the one-to-one correspondence developed by Eu and Fu \cite{Eu-Fu(2005)}
between tilings of the Aztec diamonds and tuples of non-intersecting Schr\"oder paths.

We remark that, in Theorem \ref{thm:DetNarayanaPols},
we can evaluate the determinant $\mathcal{N}^{(s)}_{n}$ of the $q$-Narayana polynomials
also for $s < -n$ and $s > n+1$ by using the formula \eqref{eq:NarayanaSylvester} from Sylvester's identity.
For example, if $s = n+2$,
\begin{gather} \label{eq:whf8escnp}
  \mathcal{N}^{(n+2)}_{n} = {}
  (-1)^{\frac{n(n-1)}{2}} q^{\frac{n(n-1)(2n+3)}{2}} \prod_{k=1}^{n} (t + q^{2k-1})^{n-k+1}
  \sum_{\ell=0}^{n} t^{n-\ell} q^{\ell^2} \binom{n+1}{\ell}_{q^2},
\end{gather}
where $\binom{m}{n}_{q}$ denotes the $q$-binomial coefficient
\begin{gather}
  \binom{m}{n}_{q} = \prod_{k=1}^{n} \frac{1-q^{m-k+1}}{1-q^{k}}.
\end{gather}
From Theorem \ref{thm:detInNIPaths},
we can read \eqref{eq:whf8escnp} in terms of non-intersecting Schr\"oder paths,
\begin{multline}
  \sum_{\bm{P} \in \bm{S}_{2,n}} t^{\mathrm{level}(\bm{P})} q^{\mathrm{area}(\bm{P})} \\
  {} = {}
  q^{\frac{n(n-1)(2n+5)}{3}} \prod_{k=1}^{n} (t + q^{2k-1})^{n-k+1}
  \sum_{\ell=0}^{n} t^{n-\ell} q^{\ell^2} \binom{n+1}{\ell}_{q^2}.
\end{multline}

We can readily observe
that the bijection in Section \ref{sec:ADT} gives a one-to-one correspondence
between $n$-tuples of non-intersecting Schr\"oder paths in $\bm{S}_{2,n}$
and tilings of the region $\mathit{AD}_{2,n}$, 
the Aztec diamond $\mathit{AD}_{n+1}$ from which two unit squares at the south corner are removed.
(See Figure \ref{fig:pathsTiling25}).
\begin{figure}
  \centering
\includegraphics{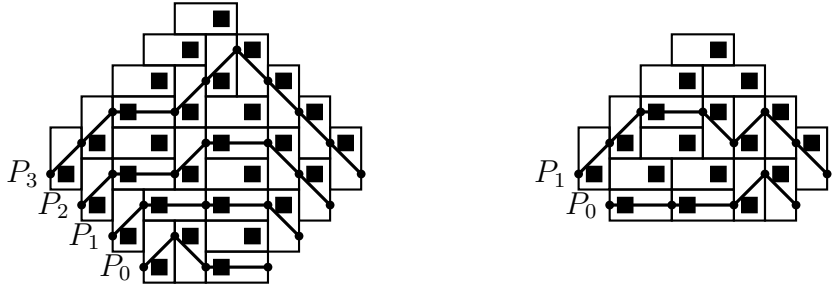}
  \caption{%
    The one-to-one correspondence between a tiling of $\mathit{AD}_{m,n}$
    and a $n$-tuple $\bm{P} = (P_0,\ldots,P_{n-1}) \in \bm{S}_{m,n}$ of non-intersecting Schr\"oder paths.
    (The left figure shows an instance for $m=2$ and $n=4$ while the right for $m=3$ and $n=2$.)
  }
  \label{fig:pathsTiling25}
\end{figure}
Therefore, as a variant of \eqref{eq:ADT}, we have 
\begin{gather}
  \sum_{T} t^{v(T)} q^{r(T)} = {}
  \prod_{k=0}^{n-1} (1 + t q^{2k+1})^{n-k} \sum_{\ell=0}^{n} t^{\ell} q^{\ell^2} \binom{n+1}{\ell}_{q^2},
\end{gather}
where the sum in the left-hand side ranges over all tilings $T$ of $\mathit{AD}_{2,n}$.
(The $\mathrm{rank}(T)$ is defined in the same way as $\mathit{AD}_{n}$
to be the minimal number of elementary moves
required to reach from ``all-horizontal'' tilings of $\mathit{AD}_{2,n}$.)
Similarly, calculating the determinant $\mathcal{N}^{(m+n)}_{n}(t,q)$,
we can obtain in principle variant formulae of \eqref{eq:ADT} 
for tilings of the Aztec diamond $\mathit{AD}_{m+n}$
from which $m(m-1)$ unit squares at the south corner are removed.
However, the value of $\mathcal{N}^{(m+n)}_{n}(t,q)$ seems much complicated for large $m$,
and exact formulae has not been found yet for general $m$ and $n$.

\section*{Acknowledgment}

The author would like to thank Professor Yoshimasa Nakamura and Professor Satoshi Tsujimoto
for valuable discussions and comments.
This work was supported by JSPS KAKENHI 24740059.





\bibliographystyle{model1b-num-names}
\bibliography{ksh94}







\end{document}